\newtheorem{theorem}{Theorem}
\newtheorem{lemma}[theorem]{Lemma}
\newtheorem{claim}[theorem]{Claim}
\newtheorem{proposition}[theorem]{Proposition}
\theoremstyle{definition}
\theoremstyle{remark}
\numberwithin{equation}{section}
\newcommand\RR{{\mathbb R}}
\def\W2{W^{1,2}({\cal O}(M))}
\def\Ai{\mbox{Ai}}
\def\b{\beta}
\newcommand{\m}{\mathfrak{m}}
\newcommand{\ra}{\rightarrow}
\newcommand\one{{\mathbf 1}}
\newcommand{\stb}{\mbox{\small $\frac{2}{\sqrt{\beta}}$}}
\newcommand{\tb}{\mbox{\small $\frac{2}{\beta}$}}
\title{Diffusion at  the random matrix hard edge}
\author{Jos\'e A. Ram\'{\i}rez 
\and Brian Rider}
\date{March 13, 2008}                                           
\begin{document}
\maketitle

\begin{abstract} 
We show that the limiting minimal
eigenvalue distributions  for a natural generalization of Gaussian sample-covariance structures
(``beta ensembles") are described  by the spectrum of a random diffusion generator.
This generator may be mapped onto the ``Stochastic Bessel Operator," introduced and studied 
by A. Edelman and B. Sutton in \cite{ES} where the corresponding convergence was first conjectured.
Here, by a Riccati transformation,  we also obtain a second diffusion description of the limiting eigenvalues
in terms of hitting laws.  All this pertains to the so-called hard edge of random matrix theory
and sits in complement to the recent work \cite{RRV} of the authors and B. Vir\'ag on the general 
beta random matrix soft edge.  In fact, the diffusion descriptions found on both sides are used below 
to prove there exists a transition between the soft and hard edge laws at all values of beta.

\end{abstract}

\section{Introduction}

The origins of random matrix theory can be traced to the introduction of Wishart's ensembles, matrices
of the form $XX^{\dagger}$ with rectangular $X$ comprised entirely of independent real or complex
Gaussians of mean zero and mean-square one.  The spectrum of these objects are of fundamental 
importance in mathematical statistics (see the comprehensive text \cite{Muir}), and continue to generate
wide interest due to their relevance to such disparate areas as information theory \cite{Tel}, numerical analysis \cite{Edelman},
and, along with their quaternion-entried counterparts, theoretical physics \cite{Verb}.

Here we consider scaling limits for Wishart-type eigenvalues at the hard edge.  To explain, let $X$ be $n \times m$. If $m \simeq n$
as $n \uparrow \infty$ the minimal eigenvalues of the (non-negative) $X X^{\dagger}$ will feel the ``hard" constraint at the origin, 
while if $m/n$ is strictly larger than one in the large dimensional limit, the minimal eigenvalues separate form zero and one has 
``soft" edge fluctuations (on which more below). In fact, if $m= n+a$ with fixed $a$ as $n \uparrow \infty$ one discovers an interesting
 family of limit laws indexed by $a$ for the bottom of the spectrum.

The known results at the hard edge have thus far been based on the explicit  joint density for the Wishart eigenvalues
$0 \le \lambda_0, \lambda_1, \cdots , \lambda_{n-1}$.  In particular, when $X$ is $n \times (n+a)$ with integer $ a > -1$, that density is
\begin{equation}
\label{LagDensity}
  P_{\beta, a} (\lambda_1, \dots, \lambda_n) = \frac{1}{Z_{\beta, a}}  \prod_{j < k} |\lambda_j - \lambda_k|^{\beta} \times 
     \prod_{k=0}^{n-1}    \lambda_k^{\frac{\beta}{2}(a+1) - 1}  e^{-\frac{\beta}{2} \lambda_k},
\end{equation}
with normalizer $Z_{\beta, a} < \infty$ and $\beta = 1, 2,$ or $4$ for real, complex, or quaternion Gaussian entries.
More importantly, with these choices of $\beta$ all finite dimensional correlation functions of the eigenvalues are
computable in terms of Laguerre polynomials (thus the common tag ``Laguerre ensembles").  At $\beta = 2$ and all
valid $a$, \cite{TW2} proves the limiting distribution of the minimal eigenvalue is described by the Fredholm determinant of
a kernel operator given in terms of Bessel functions, and based on this derives a second description of the limit law
as  a functional of the fifth Painlev\'e transcendent.  Other work at $\beta = 1, 2, 4$ hard edge include \cite{DeiftVan}, \cite{For2},
and \cite{Vanlessen}.  These again rely on the underlying orthogonal polynomial structure (the first and third reference use 
Riemann-Hilbert methods to replace the exponential weight $e^{-(\beta/2) \lambda}$ in (\ref{LagDensity}) with a more general
$e^{-V(\lambda)}$ potential), and describe the eventual limit law through Fredholm determinants or Fredholm pfaffians.

While the distribution on $n$ points $\lambda_1, \dots, \lambda_n \in \RR_+$  
defined by
(\ref{LagDensity}) makes sense for all $\beta > 0$ and $a > -1$, the orthogonal polynomial approach breaks 
down outside the standard triple in $\beta$.  For some special choices of the parameters beyond $\beta = 1,2,4$, \cite{For1} was
able to exploit the niceties of the exponential weight to obtain limit laws in terms of hypergeometric funcions.   Still,
even the existence of the general $\beta$ hard edge limit law remained open until now.

Our approach, similar to that in \cite{RRV}, rests on the existence of tridiagonal matrix models for 
all $\beta$.  Set for any $a > -1$ and $\beta > 0$,
\begin{equation}
\label{TriModel}
L_{\beta,a}= \frac{1}{\sqrt{\beta}} \left[ \begin{array}{ccccc}
\chi_{(a+n)\beta} & \chi_{(n-1)\beta} & & & \\
& \chi_{(a+n-1)\beta} & \chi_{(n-2)\beta} & & \\
& & \ddots & \ddots & \\
& & & \chi_{(a+2)\beta} & \chi_{\beta} \\
& & & &\chi_{(a+1)\beta}  \\
\end{array}
\right]
\end{equation}
in which each $\chi_r$  that appears is an independent $\chi$ random variable of the
indicated  index.  (We suppress here the dimension parameter $n$ on the $n \times n$
random matrix $L_{\beta, a}$).  Then, as discovered by Dumitriu and Edelman \cite{DE}, the eigenvalues
of $L_{\beta, a} L_{\beta, a}^T$ have (\ref{LagDensity}) as their joint density  function.   Note that, when $\beta = 1$
or $2$, the bidiagonal (\ref{TriModel}) may be arrived at by performing Householder transformations on the corresponding
``full" Wishart ensemble; this fact was used previously in a random matrix context by Silverstein \cite{Silv}.

Viewing the $n \uparrow \infty$ limit as giving rise to a continuum approximation to the discrete operators $L_{\beta, a}$, 
an entry-wise expansion in the random $\chi$ variables led Edelman and Sutton
to the following  conjecture for the full $\beta > 0$ hard edge.

\medskip

\noindent
{\bf{Conjecture}}  ({\em{Edelman-Sutton}} \cite{ES})  {\em{Let}} $s_k$  {\em{denote the}} $k$-th  {\em smallest singular value of  the bidiagonal 
operator} $L_{\beta, a}$.  {\em{Then, as}} $n \uparrow \infty$ {\em{the family}} $\{ \sqrt{n} s_k \}$ 
{\em converges in law to the corresponding singular values of }
$$
   \mathcal{L}_{\beta, a}  = -  \sqrt{x} \frac{d}{dx} + \frac{a}{2 \sqrt{x}}   +  \frac{1}{\sqrt{\beta}} b^{\prime}(x)
$$
{\em{in which}} $x \mapsto b(x)$ {\em{is a Brownian motion}}.  {\em{Here,}} $\mathcal{L}_{\beta, a}$
{\em{is understood to act on functions}} 
$f \in L^2[0,1]$ {\em{subject to}} $f(1) = 0$ {\em{and}} $(\mathcal{L}_{\beta, a} f)(0) = 0$.

\medskip

The random $ \mathcal{L}_{\beta, a}$ was tagged the Stochastic Bessel Operator in \cite{ES}
on account of the zero-noise ($\beta = \infty$) version having singular-values at the roots of  $J_a$
(the Bessel function of the first kind).

Our main result establishes this conjecture, though we prefer to phrase matters in a different way,
back in terms of eigenvalues of the symmetric ensembles $L_{\beta, a}^{} L_{\beta, a}^T$, henceforth referred to as the 
$(\beta,a)$-Laguerre ensembles.  Toward this,
introduce the random operator of second order,
\begin{equation}
\label{Operator1}
 \mathfrak{G}_{\beta, a} = -  \exp{ [  (a+1)x + \stb b(x) ]} \frac{d}{dx} \Bigl\{ \exp{[ - a x -  \stb b(x) ]} \frac{d}{dx}  \Bigr\},
\end{equation}
where again  $b(x)$ is a Brownian motion and $a > -1$, $\beta >0$. 
Formal manipulations will take you from $\mathcal{L}_{\beta, a}^{} \mathcal{L}_{\beta, a}^T$
to $\mathfrak{G}_{\beta, a}$, but the
latter is better understood upon recognizing, in the spirit of the title,  
that $ -\mathfrak{G}_{\beta, a}$ generates the diffusion
with (random) speed and scale measures
$$
  m(dx)  = e^{-(a+1) x - \stb b(x) } \, dx  \   \mbox{ and }   \  s(dx) =  e^{ax + \stb b(x)  } \, dx.
$$
This motion may be built pathwise in the classical mode (see for example \cite{IM}),
placing (\ref{Operator1}) on firm ground.  
The limiting spectral problem will require consideration 
of $\mathfrak{G}_{\beta, a}$  acting on the positive half-line  
with Dirichlet conditions at the origin, and this carries over  into 
killing the underlying process when reaching that point. 

Even more convenient, we may
define eigenvalues/eigenvectors through the resolvent equation.   That is, if we at first 
take the equation $\mathfrak{G}_{\beta, a} \psi = \lambda \psi$ to mean $ \psi = \lambda
\mathfrak{G}_{\beta, a}^{-1} \psi $,   the speed and scale construction provides
the explicit form of the inverse,
\begin{equation}
\label{Greensfunction}
  (\mathfrak{G}_{\beta, a}^{-1} \psi )(x) \equiv \int_0^{\infty}  
     \left( \int_0^{x \wedge y}  s(dz)  \right) \,  \psi(y)  \,  m(dy). 
\end{equation}
Now $ \mathfrak{G}_{\beta, a}^{-1} $ is plainly non-negative symmetric in $L^2[ \RR_+, m]$ 
and the Dirichlet condition at the origin is automatic for solutions of $ \psi = \lambda
\mathfrak{G}_{\beta, a}^{-1} \psi $.  Lying slightly deeper, we will see that (almost surely)  
$\mathfrak{G}_{\beta, a}^{-1}$ maps $L^2[\RR_+, m]$  into $C^{3/2-}$ and is in fact of trace class. 
We  have:

\begin{theorem}
\label{mainthm}   With probability one,  when restricted to the positive
half-line with Dirichlet conditions at the origin, $\mathfrak{G}_{\beta,a}$ has  discrete spectrum
comprised of simple eigenvalues $0 < \Lambda_0(\beta, a) < \Lambda_1(\beta,a)
< \cdots \uparrow \infty$.   Moreover, 
with now
 $0< \lambda_0 < \lambda_1 < \cdots  < \lambda_n$ the ordered $(\beta,a)$-Laguerre
eigenvalues, 
$$
   \{  n \lambda_0, n \lambda_1, \dots , n \lambda_k  \}   \Rightarrow \{ \Lambda_0(\beta, a), 
      \Lambda_1(\beta, a) , \dots, \Lambda_k(\beta, a) \}
$$
(jointly in law)
for any fixed $k < \infty$ as $n \uparrow \infty$.
\end{theorem}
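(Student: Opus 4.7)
The plan is to handle Theorem \ref{mainthm} in two stages: I would first establish the spectral picture for $\mathfrak{G}_{\beta,a}$ directly from the integral representation (\ref{Greensfunction}), and then prove convergence of the Laguerre eigenvalues by approximating the inverse of $nL_{\beta,a}L_{\beta,a}^T$ by $K := \mathfrak{G}_{\beta,a}^{-1}$ in trace or Hilbert--Schmidt norm.

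For the spectrum of $\mathfrak{G}_{\beta,a}$, I would identify $K$ with the non-negative symmetric integral operator on $L^2(\RR_+,m)$ with kernel $G(x,y)=\int_0^{x\wedge y} s(dz)$. Almost-sure trace class follows from bounding
$$\int_0^\infty G(x,x)\,m(dx) = \int_0^\infty \Bigl(\int_0^x e^{az+(2/\sqrt\beta)b(z)}\,dz\Bigr) e^{-(a+1)x-(2/\sqrt\beta)b(x)}\,dx,$$
using the law of large numbers $b(x)/x\to 0$ so that the integrand decays like $e^{-x}$ at infinity (up to a subexponential random factor), together with $a>-1$ and $G(x,x)\sim x$, $m(dx)\sim dx$ near $0$. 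Hence $\mathfrak{G}_{\beta,a}$ has discrete spectrum $\Lambda_k\uparrow\infty$, strictly positive by injectivity of $K$. Simplicity is a Sturm--Liouville statement: $K\psi = \lambda^{-1}\psi$ becomes a second-order ODE for $\psi$ in the speed/scale sense, whose two-dimensional solution space is cut down to one dimension by the Dirichlet condition at $0$ together with $L^2(m)$-integrability at $\infty$.

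For the convergence claim, $(nL_{\beta,a}L_{\beta,a}^T)^{-1}$ admits an explicit discrete Green's function representation built from partial sums of the (appropriately centered and scaled) $\chi^2$ variables along the bidiagonal of $L_{\beta,a}$. Under the natural lattice-to-continuum embedding, a Donsker-type functional CLT shows that the partial sums
$$\frac{2}{\sqrt\beta}\sum_{j\le \lfloor nx\rfloor}\Bigl(\frac{\chi_{(a+n-j)\beta}^2}{(a+n-j)\beta}-1\Bigr),$$
and the analogous superdiagonal sum, jointly converge on compact sets to $b(x)$. Consequently the discrete Green's kernel lifted to $\RR_+\times\RR_+$ converges almost surely pointwise to $G(x,y)$ after a suitable coupling (Skorokhod embedding or KMT), while the discrete speed weights converge to $m$.

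The main obstacle is to upgrade this pointwise convergence to trace-norm (or at minimum Hilbert--Schmidt) convergence of $(nL_{\beta,a}L_{\beta,a}^T)^{-1}$ to $K$, since it is this stronger mode that forces joint convergence of eigenvalues. The random amplifications $e^{\pm(2/\sqrt\beta)b(x)}$ entering $m$ and $s$ carry only stretched-exponential tails, so uniform integrability in $n$ of the discrete diagonal kernel against the discrete speed weight is not automatic. I would address this by (i) exponential martingale bounds on sums of $\log(\chi_r^2/r)$ along the bidiagonal, producing an integrable $n$-independent envelope for the tail of the discrete diagonal kernel with the same qualitative structure as $G(x,x)\,m(dx)$, and (ii) a $\chi^2$-concentration estimate for the bottom-most bidiagonal entries to control the short-scale contribution uniformly in $n$. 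Once trace-norm convergence is established on a single probability space, the continuous mapping $\mu\mapsto 1/\mu$ applied to the top $k+1$ eigenvalues of the inverses delivers the stated joint convergence of $\{n\lambda_0,\ldots,n\lambda_k\}$ to $\{\Lambda_0(\beta,a),\ldots,\Lambda_k(\beta,a)\}$.
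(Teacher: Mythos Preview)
Your overall strategy---prove compactness of $\mathfrak{G}_{\beta,a}^{-1}$ directly from the kernel, then establish Hilbert--Schmidt or trace-norm convergence of the discrete inverses via pointwise convergence plus an $n$-independent integrable envelope built from exponential-martingale bounds on log-$\chi$ sums---matches the paper's in spirit, and your identification of the envelope step as the crux is exactly right. The paper, however, implements this differently in two ways that simplify the analysis. First, rather than attacking $(nL_{\beta,a}L_{\beta,a}^T)^{-1}$ head-on, it passes to a similarity $M_{\beta,a}=SL_{\beta,a}S^{-1}$ and works with the \emph{factored} square root $(\sqrt{n}\,M_{\beta,a})^{-1}$, whose bidiagonal structure yields an explicit lower-triangular product formula (Lemma~4); Hilbert--Schmidt convergence of this square root immediately gives operator-norm convergence of the product $(K^n)^T K^n$. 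Second, the discrete and limiting kernels are realized on $L^2[0,1]$ with \emph{fixed Lebesgue measure}, so the envelope argument (Proposition~\ref{dominating} plus the LIL-type bound (\ref{tight2})) never has to contend with a random reference measure; the identification with $\mathfrak{G}_{\beta,a}^{-1}$ on $L^2(\RR_+,m)$ is made only at the end via the change of variables $(x,y)\mapsto(e^{-x},e^{-y})$ and a time-changed Brownian motion. Your route through the full tridiagonal Green's function on the random-weighted space would work, but the factored square-root on $[0,1]$ buys a cleaner kernel formula and a deterministic dominating function, which is what makes the envelope step in Lemma~\ref{HSlemma} go through without additional uniform-in-$n$ control on the discrete speed weights.
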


\medskip

\noindent
{\em Remark. }  The Dirichlet condition for $\mathfrak{G}_{\beta, a}$ at $x=0$ may be mapped
to that at $x=1$ for $\mathcal{L}_{\beta, a}$ in the Edelman-Sutton conjecture.  On the other hand,
the process generated by $- \mathfrak{G}_{\beta, a}$ has a natural (or free) boundary at $x=+\infty$,
which carries certain advantages over  the specified condition for $\mathcal{L}_{\beta, a}$ at
$x=1$ in the conjecture.

\medskip

As a bit of amplification, differentiating with abandon one is led to 
$$
  - \mathfrak{G}_{\beta,a} = e^{x}   \Bigl(  \frac{d^2}{dx^2} - ( a + \stb b^{\prime}(x) ) \frac{d}{dx} \Bigr),
$$
along with the idea that the corresponding motion 
is  just a Brownian motion with (shifted) white noise drift.   In fact, modulo
the multiplicative factor $e^x$ which affects a change of time, this is precisely the random diffusion
introduced by Brox as a continuum analogue of Sinai's walk \cite{Brox}.   Theorem \ref{mainthm} 
then draws a 
concrete connection between random matrix theory and the  lifetime of this random process in a random
environment which has been the subject of continued investigation since its introduction (see \cite{Talet} and the many references
within, or the recent \cite{Bovier} for a spectral point of view).

Our second description of the hard-edge is a corollary of the first, employing  Riccati's  map 
to transform a solution of, the suitably interpreted, $ \mathfrak{G}_{\beta,a}  \psi(x,\lambda) =
 \lambda \psi(x,\lambda) $ for any fixed $\lambda \ge 0$
into one of
\begin{equation}
\label{diff}
   dp(x)  = \stb p(x) db(x) + \left( (a + \tb ) p(x) - p^2(x) - \lambda e^{-x} \right) dx,
\end{equation}
understood in the sense of It\^{o}. The point is: Sturm's oscillation theorem implies 
that the eigenvalues  of $ \mathfrak{G}_{\beta,a}$ are counted by the zeros of $\psi(\cdot, \lambda)$,
and those zeros correspond to places 
where $p \equiv \psi^{\prime}/\psi$, which solves  (\ref{diff}), hits $-\infty$.

\begin{theorem}
\label{DiffThm}
Let $P_{x, c} $ denote the law of $p(\cdot) = p(\cdot; a, \beta,\lambda)$ 
 starting from position $c$
at time $x$.  Let also $\nu_{x}(dc) = P_{x, +\infty} ( \mathfrak{m} \in dc )$ where
$\mathfrak{m}$ is the passage time of $p$ to $-\infty$. Then,
\begin{eqnarray*}
  P( \Lambda_0(\beta,a) > \lambda) & = &  \nu_{0}(\{\infty\})   \mbox{ and, more generally, } \\
  P (\Lambda_k(\beta, a) < \lambda) & =  & \int_{\RR^{k+1}} \nu_0(dx_1) \nu_{x_1}(dx_2) \cdots
     \nu_{x_{k}}(d x_{k+1}).
\end{eqnarray*}
\end{theorem}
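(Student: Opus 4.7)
The plan is to combine a Riccati reduction of the eigenvalue equation with Sturm oscillation and the strong Markov property of the resulting diffusion. For fixed $\lambda\geq 0$, I would first construct via the speed/scale representation the canonical solution $\psi(\cdot,\lambda)$ of $\mathfrak{G}_{\beta,a}\psi = \lambda\psi$ with $\psi(0)=0$ and $\psi'(0^+)=1$. Explicitly,
$$\psi'(x) = e^{ax+\stb b(x)} - \lambda\int_0^x e^{a(x-y)+\stb(b(x)-b(y))}\,e^{-y}\psi(y)\,dy,$$
so $\psi\in C^1$ and its distributional second derivative satisfies $\psi'' = (a+\stb\,\dot b(x))\psi' - \lambda e^{-x}\psi$. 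Setting $p = \psi'/\psi$ on intervals where $\psi\neq 0$ and applying It\^o's formula yields the SDE (\ref{diff}), with the $\tb\,p$ drift arising as the Stratonovich-to-It\^o correction for the multiplicative noise $\stb\,p\,db$. At each zero $\tau$ of $\psi$, uniqueness forces $\psi'(\tau)\neq 0$, so $p(\tau^-)=-\infty$ and $p(\tau^+)=+\infty$; gluing across these instants realizes $p$ as a strong Markov diffusion on $\RR$ with instantaneous ``$-\infty\to+\infty$'' restart, entering from $+\infty$ at $x=0$.

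Since $\mathfrak{G}_{\beta,a}$ has discrete spectrum (Theorem \ref{mainthm}) and is in the limit-point case at its natural boundary $+\infty$, Sturm's oscillation theorem gives, for $\lambda$ outside the spectrum (which occurs almost surely for any fixed $\lambda$), that the number of zeros of $\psi(\cdot,\lambda)$ in $(0,\infty)$ equals $N(\lambda) := \#\{k:\Lambda_k(\beta,a)<\lambda\}$. By the previous paragraph this same integer is $\#\{j\geq 1 : \tau_j<\infty\}$, where $\tau_1 < \tau_2 < \cdots$ enumerate the successive explosions of $p$ to $-\infty$ with the convention $\tau_{j+1}=\infty$ when no further explosion occurs.

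Finally, by the strong Markov property of $p$ at each $\tau_j$ combined with the canonical restart from $+\infty$, the sequence $(\tau_j)_{j\geq 1}$ is an inhomogeneous Markov chain on $(0,\infty]$ with one-step transition $P(\tau_{j+1}\in dy\mid \tau_j=x) = \nu_x(dy)$. Iterating produces the joint law $\nu_0(dx_1)\nu_{x_1}(dx_2)\cdots\nu_{x_k}(dx_{k+1})$ for $(\tau_1,\ldots,\tau_{k+1})$ on $(0,\infty]^{k+1}$, and combining with the Sturm count gives both assertions of Theorem \ref{DiffThm}:
$$P(\Lambda_0(\beta,a) > \lambda) = P(\tau_1=\infty) = \nu_0(\{\infty\}), \qquad P(\Lambda_k(\beta,a) < \lambda) = P(\tau_{k+1}<\infty),$$
with the latter equal to the claimed iterated integral over $\RR^{k+1}$.

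The main obstacle is the first step: interpreting the Riccati substitution across a white-noise coefficient and verifying pathwise that the It\^o diffusion built from (\ref{diff}), with entrance from $+\infty$ and restart at each $-\infty$ explosion, coincides with $\psi'/\psi$ through every zero of $\psi$. Once this construction of $p$ is in hand, the subsequent Sturm oscillation argument and the Markov iteration are comparatively routine.
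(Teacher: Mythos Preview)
Your argument is correct in outline and largely parallel to the paper's, but the route diverges at one key point. Both you and the paper derive the system \eqref{eq:PsiSystem} for the sine-like solution $\psi_0(\cdot,\lambda)$ with $\psi_0(0)=0$, $\psi_0'(0)=1$ (your integral formula for $\psi'$ is correct and checks against \eqref{eq:PsiSystem} by It\^o's rule), perform the Riccati substitution to obtain \eqref{Riccati}, and identify zeros of $\psi_0$ with explosions of $p$ via the restart mechanism. The Markov-chain iteration for the successive passage times is the same in both.

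The difference is in how the zero count is tied to the spectral count. You invoke Sturm oscillation directly on the half line, asserting that $\mathfrak{G}_{\beta,a}$ is limit-point at $+\infty$ and that therefore the number of zeros of $\psi_0(\cdot,\lambda)$ on $(0,\infty)$ equals $\#\{k:\Lambda_k<\lambda\}$. The paper instead truncates to $[0,L]$ with Dirichlet conditions at both ends (Lemma~\ref{countlemma}), where the Sturm count is entirely classical, and then proves separately (the lemma immediately following) that the eigenvalues of $\mathfrak{G}_{\beta,a}^L$ converge to those of $\mathfrak{G}_{\beta,a}$ as $L\to\infty$, via pointwise-dominated convergence of the Green kernels and hence trace-norm convergence of the inverses. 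Your route is shorter but leans on two facts you do not verify: the limit-point classification at $+\infty$ for this random operator (equivalently, that $+\infty$ is a natural boundary for the associated diffusion, which the paper takes from the speed/scale picture but does not spell out), and the validity of half-line Sturm oscillation in that singular setting. The paper's truncation-plus-limit argument is more self-contained and sidesteps both issues at the cost of the extra convergence lemma.
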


Theorem 1 is proved in Section 2, and Theorem 2 in Section 3.   We conclude the introduction by describing a general transition 
between the hard edge laws just described and the form of the $\beta > 0$ soft edge laws established in \cite{RRV}. 

\medskip

\noindent
{\em Remark.}  It is natural to ask whether from Theorems 1 and 2 one might recover the Painlev\'e or Hypergeometric 
descriptions of the hard edge from \cite{TW2} or \cite{For1} respectively, and then go further by finding explicit formulas
of the distributions at all $\beta >0$.  Thus far the answer is no,   even when $(a+1) = 2/\beta$
and  $\Lambda_0(a, \beta)$ is just an exponential random variable (as is easily seen  from the joint density  (\ref{LagDensity})).
With that choice of parameters,  the speed  $m'(x) = e^{ - (a+1) x -\stb b(x)}$ of the $\mathfrak{G}_{\beta, a}$-diffusion turns out to be a martingale, 
but we do not see how to make use of this.

\subsection*{Soft edge and transition}

The random matrix soft edge corresponds to the scaling limits of the maximal, rather than minimal, eigenvalues in the Laguerre
ensembles.  Historically, these laws were discovered first by Tracy and Widom (\cite{TW1} and \cite{TW3}) in the context of a different 
class of random matrices, the Gaussian  Orthogonal, Unitary, and Symplectic ensembles. The latter are $n \times n$ real symmetric, complex
hermitian, or quaternion self-dual matrices with Gaussian entries.  There is again an explicit joint spectral density, which takes the form
of a constant multiple of $\prod | \lambda_i - \lambda_j|^{\beta}  e^{- (\beta/4) \sum  \lambda_i^2}$ with $\beta = 1, 2$ or $4$ respectively.
And once more, all correlations are given in terms of orthogonal polynomials (now Hermites).  Tracy and Widom 
proved that the appropriately scaled largest eigenvalues have distribution functions  described by Painlev\'e II (via a more basic
formulation in terms of Fredholm determinants/pfaffians of an Airy kernel). Matters were later carried over to the Laguerre soft edge 
by a collection of authors.

As before, one may consider the general ``$\beta$-Hermite" laws.   \cite{DE} provides a separate family of tridiagonal matrix models
for these laws (though see also \cite{Trotter} for an earlier application at $\beta=1,2$ ), and in direct analogy with Theorems 1 and 2 the authors and B. Vir\'ag have previously proved:

\medskip
\noindent
{\bf Theorem} ({\em{Theorems 1.1 and 1.2 of}} \cite{RRV}) {\em The largest eigenvalues in either the} $\beta$-{\em{Laguerre or}}
$\beta$-{\em{Hermite ensembles have scaling limits given by the law of the top eigenvalues
of the random Schr\"odinger operator}} 
$
   - \mathcal{H}_{\beta} =  \frac{d^2}{dx^2} - x + \stb b'(x).
$ 
{\em There is also a description of the limiting soft-edge eigenvalues equivalent to 
that in Theorem 2, with the diffusion}
\begin{equation}
\label{olddiff}
  dp(x) = \stb db(x) + (\lambda + x - p^2(x)) dx
\end{equation}
{\em in place of}  (\ref{diff}).
\medskip

The first part of this result, the identification of $-\mathcal{H}_{\beta}$ at the soft edge, proves a different conjecture of Edleman and Sutton from \cite{ES}.
For obvious reasons, we refer to the distribution of the top eigenvalue of $-\mathcal{H}_{\beta}$ as the general beta Tracy-Widom law,
notated $TW_{\beta}$.

Returning to the $(\beta, a)$-Laguerre ensembles, it is well understood that if $a$ tends to infinity with $n$ so that $\lim_{n \rightarrow \infty} m/n = \lim_{n \rightarrow \infty} (n+a)/n > 1$,
the limiting spectral measure is pulled away from the origin and one sees soft-edge behavior at both the minimal and maximal eigenvalues.
Thus, one expects that by taking $a \rightarrow \infty$ after $n \rightarrow \infty $, the hard-edge becomes a soft-edge and creates a link between
these families of distributions arising in random matrix theory.  Borodin and Forrester \cite{BF} have shown that this is indeed the case in the classical
$\beta = 1, 2$ and $4$ settings.  Their work rests on the aforementioned  determinantal 
 forms of the underlying distribution functions.
Employing just the diffusions (\ref{diff}) and (\ref{olddiff}), our final result shows that this transition holds true at all $\beta > 0$.

\begin{theorem}  
\label{h2sthm}
With $\Lambda_0(\beta,a)$ the limiting smallest eigenvalue in the
$(\beta,a)$-ensemble and $TW_\beta$ the general beta Tracy-Widom law,
$$
    \frac{  \eta - \Lambda_0(\beta, 2 \sqrt{\eta} - \tb )} {  \eta^{2/3}} \Rightarrow TW_{\beta}
$$
as $\eta \ra \infty$.
\end{theorem}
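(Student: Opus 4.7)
My plan is to derive Theorem~\ref{h2sthm} from Theorem~\ref{DiffThm} together with its soft-edge counterpart from \cite{RRV}. Both results encode the edge eigenvalue distribution as the probability that a scalar It\^o diffusion of Riccati type (namely (\ref{diff}) at the hard edge and (\ref{olddiff}) at the soft edge) avoids blowing up to $-\infty$. Proving Theorem~\ref{h2sthm} thus reduces to exhibiting a scaling under which the hard-edge SDE (\ref{diff}), with the prescribed parameters, converges to the soft-edge SDE (\ref{olddiff}), in a way compatible with their blow-up functionals.

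Fix $\xi \in \RR$, set $a = 2\sqrt\eta - \tb$ and $\lambda = \eta - \eta^{2/3}\xi$ in (\ref{diff}), and note that by Theorem~\ref{DiffThm}
\[
  P\!\left(\frac{\eta - \Lambda_0(\beta, a)}{\eta^{2/3}} \le \xi\right) = P(\Lambda_0(\beta,a) \ge \lambda) = P\bigl(p \text{ never hits } -\infty \text{ on } [0,\infty)\bigr),
\]
with $p$ started from $+\infty$ at $x=0$. I would zoom into the soft-edge window via
\[
  y = \eta^{1/3} x, \qquad q(y) = \eta^{-1/3}\bigl(p(\eta^{-1/3} y) - \sqrt\eta\bigr),
\]
which carries $p(0+) = +\infty$ to $q(0+) = +\infty$. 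A direct It\^o calculation, Taylor expanding $e^{-\eta^{-1/3} y}$, gives
\[
  dq(y) = \stb\bigl(1 + \eta^{-1/6} q\bigr) dB(y) + \bigl(\xi + y - q^2\bigr) dy - \eta^{-1/3}\bigl(\tfrac{1}{2} y^2 + \xi y\bigr) dy + o(\eta^{-1/3}) \, dy,
\]
with $B$ a standard Brownian motion in $y$; formally sending $\eta \ra \infty$ collapses this to the soft-edge Riccati equation (\ref{olddiff}) with parameter $\xi$ in place of $\lambda$.

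Turning this formal limit into an honest weak convergence on $C([\delta, T]; \RR)$ (for each $0 < \delta < T < \infty$) jointly with convergence of the blow-up time is the real work. The engine throughout is the restoring drift $-q^2$: it forces $q$ to fall from $+\infty$ at $y=0$ to an $O(1)$ value in time of order $\eta^{-1/3}$ (entrance estimate at $y=0^+$), it tames the multiplicative noise correction $\eta^{-1/6} q\, dB$ on every excursion of $q$ bounded away from $+\infty$, and it dominates the deterministic perturbation on compact $y$-intervals. Tightness of $\{q\}_\eta$, identification of every subsequential limit as a weak solution of (\ref{olddiff}) with the unique entrance law from $+\infty$, and continuity of the explosion-time functional under uniform path convergence are then standard, and follow by the coupling/comparison arguments developed for the soft-edge diffusion in \cite{RRV}.

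To match the limit with $TW_\beta$ one must reconcile starting points: the \cite{RRV} Riccati representation of $TW_\beta$ starts (\ref{olddiff}) at $y = -\infty$ from $+\infty$, while our scaled SDE starts it at $y = 0$ from $+\infty$. Both assignments give the same probability of no explosion on $[0,\infty)$ because (\ref{olddiff}) admits a unique entrance law from $+\infty$ and the probability of explosion on $(-\infty, 0]$ tends to zero as the start time is pushed back. Combined with the SDE convergence, this yields
\[
  P\!\left(\frac{\eta - \Lambda_0(\beta, 2\sqrt\eta - \tb)}{\eta^{2/3}} \le \xi\right) \ra P(TW_\beta \le \xi)
\]
at every continuity point $\xi$, which is the desired weak convergence. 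The main obstacle, I expect, is the uniform entrance-law control at $y = 0^+$ for the scaled equation, where both the singular initial datum and the degeneracy of the noise coefficient $1 + \eta^{-1/6} q$ conspire; once this control is in hand the remainder is standard SDE-convergence bookkeeping.
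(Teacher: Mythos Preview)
Your strategy is the paper's: encode both edges via their Riccati diffusions, apply the affine-plus-time rescaling $q(y)=\eta^{-1/3}\bigl(p(\eta^{-1/3}y)-\sqrt\eta\bigr)$ (equivalently the paper's $q_\eta(x)=\eta^{1/6}(\mathfrak p(\eta^{-1/3}x)-1)$ with $\mathfrak p=p/\sqrt\eta$), and show the scaled SDE converges to the soft-edge one together with its explosion functional. Your It\^o computation and formal limit are correct. One factual slip: the \cite{RRV} Riccati representation of $TW_\beta$ starts (\ref{olddiff}) at $y=0$ from $+\infty$, not at $y=-\infty$; your paragraph about reconciling starting points is therefore unnecessary, and the claim about ``probability of explosion on $(-\infty,0]$'' plays no role.

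On the step you correctly flag as the crux---uniform control of the entrance from $+\infty$ at $y=0^+$---the paper does \emph{not} attempt a direct entrance-law or tightness estimate for the scaled process. Instead it sandwiches: by monotonicity and the Markov property, the $\mathfrak p$-probability of no explosion from $(0,+\infty)$ lies between that from $(0,1+\varepsilon)$ and that from $(\delta,1+\varepsilon)$ plus the tail $P_{+\infty}(\mathfrak m_{1+\varepsilon}>\delta)$. This tail is bounded by Chebyshev against the mean hitting time of a \emph{homogeneous} comparison diffusion (replace $e^{-x}$ by $e^{-\delta}$), which is computed explicitly via speed/scale and a stationary-phase estimate, giving $O(K/M)$ for the choices $\varepsilon=M\eta^{-1/6}$, $\delta=K^{-1}\eta^{-1/3}$. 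After rescaling, both bounding probabilities involve $q_\eta$ with \emph{finite} initial data $(0,M)$ and $(K^{-1},M)$, so a standard martingale-problem convergence criterion (Stroock--Varadhan, stated as Proposition~\ref{SVlem}) applies directly, and the explosion time is handled by approximating $\mathfrak m_{-\infty}$ by $\mathfrak m_{-L}$. Sending $M\to\infty$ then $K\to\infty$ closes the sandwich. This device is what replaces the entrance-boundary analysis you anticipate; without it (or an equivalent), your sketch is not yet a proof.
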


Theorem \ref{h2sthm} is proved in Section 4.

In summary, together with \cite{RRV} the present provides a complete picture of the extremal laws of random matrix theory, at all
values of the natural parameters. This leaves apart the general $\beta$ spectral bulk, which has recently been 
treated by Valko-Vir\'ag  \cite{ValkoVirag} (for the $\beta$-Hermite ensembles) and
Killip-Stoiciu \cite{Kill} (for the circular $\beta$ ensembles, 
generalizing the eigenvalue laws for the Haar distributed unitary group).

\section{Convergence of the spectrum}

The key is to prove the almost sure strong convergence of the resolvent operators, 
or really a similarity transformation of the sequence of $(L_{\beta, a}^{} L_{\beta, a}^T)^{-1}$ 
matrices to a version of ${\mathfrak{G}}_{\beta, a}^{-1}$.   As we will see, all these objects 
may be viewed as integral operators with well-behaved kernels, allowing for an efficient
verification of the necessary compactness.

\subsection*{Outline}

Set $M_{\beta, a} = S L_{\beta, a} S^{-1}$ where $S$ is the anti-diagonal matrix
of alternating signs $S_{ij} = (-1)^i \delta_{i+j - n-1}$.  The spectrum is unchanged
(we may work with $M_{\beta, a} M_{\beta, a}^T$ rather than $L_{\beta, a} L_{\beta, a}^T$),
and we record
$$
   M_{\beta, a } = \frac{1} {\sqrt{\beta}}  
 \left[ \begin{array}{ccccc}
\chi_{(a+1)\beta} &   & & & \\
- {\tilde{\chi}}_{\beta} & \chi_{(a+2)\beta} &   & & \\
&    -  {\tilde{\chi}}_{2 \beta} &  \chi_{(a+3) \beta}  &  & \\
& & \ddots &  \ddots &   \\
& & &   - \tilde{\chi}_{(n-1) \beta} &  \chi_{(a+n) \beta}   \\
\end{array}
\right],
$$
where the additional notation is intended to emphasize the independence of the processes
along the main and lower diagonals.

Wishing to track inverses, we first note the readily checked fact:

\begin{lemma}  For any lower bidiagonal matrix $B = b_{i,j}$ (that is, $b_{i j} = 0$ if $j> i$ or $j < i-1$),
the inverse, when it exists, is lower triangular and has the expression
$$
   [B^{-1}]_{i,j}  =  \frac{(-1)^{i+j}}{b_{ii}} \prod_{k=j}^{i-1} \frac{b_{k+1, k}}{b_{k,k}} \  \  \   \mbox{ for } j \le i.
$$ 
\end{lemma}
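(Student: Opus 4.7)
The plan is to exploit the bidiagonal structure via a Neumann series. Write $B = D + E$, where $D = \mathrm{diag}(b_{1,1},\dots,b_{n,n})$ is the diagonal part and $E$ is the strictly lower bidiagonal matrix carrying the subdiagonal entries $b_{i,i-1}$. Then $B = D(I + D^{-1}E)$, so formally
\begin{equation*}
   B^{-1} = (I + D^{-1}E)^{-1} D^{-1} = \sum_{m\geq 0} (-1)^m (D^{-1}E)^m  D^{-1}.
\end{equation*}
The point is that $D^{-1}E$ is supported on a single subdiagonal, so by induction $(D^{-1}E)^m$ is supported on the $m$-th subdiagonal and vanishes identically once $m \ge n$. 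Hence the series truncates (no convergence issues), and the triangular support of each summand forces $B^{-1}$ to be lower triangular.

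Next, I would compute the unique nonvanishing entry of $(D^{-1}E)^m$. Iterating the definition and telescoping gives
\begin{equation*}
   \bigl[(D^{-1}E)^{m}\bigr]_{i, i-m} = \prod_{\ell = 0}^{m-1} \frac{b_{i-\ell,\, i-\ell-1}}{b_{i-\ell,\,i-\ell}} = \prod_{k=i-m}^{i-1}\frac{b_{k+1,k}}{b_{k+1,k+1}}.
\end{equation*}
Setting $m = i - j$ and multiplying on the right by $D^{-1}$ produces
\begin{equation*}
   [B^{-1}]_{i,j} = \frac{(-1)^{i-j}}{b_{j,j}} \prod_{k=j}^{i-1}\frac{b_{k+1,k}}{b_{k+1,k+1}}.
\end{equation*}
Since the collection of denominators $\{b_{j,j}, b_{j+1,j+1}, \dots, b_{i,i}\}$ can equivalently be bundled as $b_{i,i}$ together with $\{b_{j,j}, \dots, b_{i-1,i-1}\}$, and because $(-1)^{i-j} = (-1)^{i+j}$, this is precisely the expression in the statement. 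The diagonal case $j=i$ is handled by the empty-product convention.

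No real obstacle arises; the only mild care needed is the inductive verification that $(D^{-1}E)^m$ has the stated support and telescoped value, which is a one-line computation. An entirely equivalent route, if one prefers to avoid the series altogether, is direct verification that the proposed $A$ satisfies $BA = I$: bidiagonality collapses $\sum_k B_{ik} A_{kj}$ to $B_{i,i}A_{i,j} + B_{i,i-1} A_{i-1,j}$, and the alternating sign $(-1)^{i+j}$ built into $A$ is exactly what is needed to cancel these two contributions for $j < i$, while the diagonal case is immediate.
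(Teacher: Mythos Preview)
Your argument is correct. The paper in fact offers no proof at all: the lemma is introduced as ``the readily checked fact,'' so there is nothing to compare against beyond the implicit suggestion that a direct computation suffices. Your Neumann-series approach (writing $B=D(I+D^{-1}E)$ and expanding) is a clean way to organize that computation, and the reindexing of denominators you perform to match the stated form is accurate. The alternative you sketch at the end---checking $BA=I$ directly by collapsing each row-times-column to two terms---is presumably closer in spirit to what the authors had in mind by ``readily checked,'' but both routes are equally valid and elementary.
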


Next, observe that for any $A = a_{i,j} \in \RR^{n \times n}$ there is
a natural operator embedding 
into $L^2[0,1]$ which does change the spectrum:
$$
     (A f) (x)  \equiv  \sum_{j=1}^n   a_{i,j} n \int_{x_{j-1}}^{x_{j}}  f(x) dx  \mbox{  for  }  x_{i-1} \le x < x_{i},
$$
where hereafter we define $x_i = i/n$, for $i =1,2, \dots, n$. Thus, moving attention 
to $ ( n M_{\beta, a} M_{\beta_a}^T )^{-1}$  (after introducing the 
appropriate hard-edge scaling),
the action of $ n^{-1/2} M_{\beta, a}^{-1}$  on $L^2[0,1]$  reads
$$
   \Bigl( ( \sqrt{n} M_{\beta, a}  )^{-1}   f \Bigr) (x) =  \sum_{j=1}^{\lfloor nx  \rfloor}  
     \frac{\sqrt{\beta n}}{\chi_{( \lfloor nx  \rfloor +a) \beta }}  \prod_{k=j}^{\lfloor nx  \rfloor - 1}  
     \frac{\tilde{\chi}_{k \beta }}{\chi_{(k+a)\beta}}   \, \int_{x_{j-1}}^{x_{j}} f(x) dx.
$$
In other words,  $n^{-1/2} M_{\beta, a}^{-1}$ is 
equated with the integral operator  $K_{\beta,a}^n$ with (discrete) kernel
\begin{eqnarray}
\label{thekernel}
k_{\beta,a}^n(x,y) &=& \frac{\sqrt{\beta n}}{\chi_{(i+a)\beta}} \exp\left\{\sum_{k=j}^{i-1} \log{
\tilde{\chi}_{k \beta}}-\log{\chi_{(k+a)\beta}} \right\}  \,\one_L(x,y) 
\end{eqnarray}
where  $\one_L  = \one_{\{x_{i-1} \leq x <x_{i}\}}\one_{\{x_{j-1} \leq y <x_{j}\}}$ and $i > j$.

With this set-up, the basic convergence result we need is the following.

\begin{lemma} 
\label{Lawlemma}
There is a Brownian motion $b(\cdot)$ such that
for $x < y$ lying in $(0,1]$  
\begin{equation}
\label{conv1}
   \frac{\sqrt{n \beta}}{ \chi_{( \lfloor nx \rfloor +a) \beta} }   \Rightarrow \frac{1}{\sqrt{x}}
\end{equation}
and
\begin{equation}
\label{conv2}
   \sum_{k = \lfloor ny \rfloor}^{  \lfloor nx \rfloor }  ( \log  \tilde{\chi}_{k \beta}  - \log {\chi_{(k+a)\beta}} )  
    \Rightarrow    (a/2) \log (y/x) +  \int_y^x \frac{db_z}{\sqrt{\beta z}}, 
\end{equation}
in law in the Skorohod topology. 
Morever, the exist tight random constants $\kappa_n>0$ and $\kappa_n' >0$
which are independent of $\beta$ so that
\begin{equation}
\label{tight1}
\sup_{1 \le k \le n}  \frac{ \sqrt{ k \beta }  }{ \chi_{(k+a) \beta} }  \le  \kappa_n
\end{equation}
and, with $T(x) =\frac{1}{\beta} \log \frac{1}{x}$,
\begin{equation}
   \sum_{k=j}^{i-1}   \left( \log \tilde{\chi}_{k \beta}   -  \log  \chi_{(k+a)\beta} \right) 
    -  (a/2) \log (j/i)   \le \kappa_n^{\prime}  (1 + T^{3/4}(x_i)  + T^{3/4} (x_j)) 
\label{tight2}
\end{equation}
for all $1 \le j < i \le n$.
\end{lemma}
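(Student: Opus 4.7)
The plan is to reduce everything to sharp asymptotics for $\log \chi_r$ at large $r$. Using $\chi_r^2 \sim \mathrm{Gamma}(r/2, 2)$ and the standard expansion $\psi(r/2) = \log(r/2) - 1/r + O(r^{-2})$, one obtains
$$
 \mathbb{E}[\log \chi_r] = \tfrac{1}{2}\log r - \tfrac{1}{2r} + O(r^{-2}), \qquad \mathrm{Var}(\log \chi_r) = \tfrac{1}{2r} + O(r^{-2}),
$$
and the centered variable $\log \chi_r - \mathbb{E}[\log \chi_r]$ has doubly-exponential upper and exponential lower tails on scale $r^{-1/2}$. These analytic facts I would establish once up front; the rest of the lemma is essentially bookkeeping with them.

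The convergence \eqref{conv1} is a one-line weak law of large numbers $\chi_{r_n}/\sqrt{r_n}\to 1$ applied with $r_n = (\lfloor nx\rfloor + a)\beta \sim n\beta x$; joint Skorohod convergence in $x$ follows from monotonicity of the limit $1/\sqrt{x}$ together with pointwise convergence on a dense set. For \eqref{conv2}, split the sum into mean and centered parts. The mean reads
$$
  \tfrac{1}{2}\!\sum_{k=\lfloor ny\rfloor}^{\lfloor nx\rfloor} \log\!\left(\frac{k}{k+a}\right) + O(1/n) = -\tfrac{a}{2}\!\sum_{k=\lfloor ny\rfloor}^{\lfloor nx\rfloor} \frac{1}{k} + O(1) \longrightarrow \tfrac{a}{2}\log(y/x),
$$
by a Riemann sum. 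The centered sum $\sum_k \xi_k$, with $\xi_k$ independent and $\mathrm{Var}(\xi_k) \approx 1/(k\beta)$, has variance $\sum_{\lfloor ny\rfloor}^{\lfloor nx\rfloor} 1/(k\beta) \to \int_y^x dz/(\beta z)$, which matches the variance of $\int_y^x db_z/\sqrt{\beta z}$. A functional CLT for triangular arrays (the Lindeberg condition being trivial from the strong tails) on the logarithmic time scale $t = \log(x/y)$ then produces the stated joint weak convergence.

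The main obstacle is the pathwise tightness bounds \eqref{tight1} and \eqref{tight2}. For \eqref{tight1}, the sharp lower-tail estimate $\mathbb{P}(\chi_r \le \sqrt{r}/t) \le C t^{-(r-1)}$ for $t>1$ is strong enough in $r$ that a single union bound over $k\in\{1,\ldots,n\}$ (with $\kappa_n$ growing only logarithmically in $n$) controls $\min_k \chi_{(k+a)\beta}/\sqrt{k\beta}$ from below by $1/\kappa_n$, uniformly in $\beta$. For \eqref{tight2}, the deterministic error from the mean expansion is uniformly $O(\sum_k k^{-2}) = O(1)$, so it remains to bound the two-parameter supremum of the partial sums $M_{i,j} := \sum_{k=j}^{i-1}\xi_k$. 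I would apply a Bernstein-type maximal inequality at each dyadic scale $i/j \asymp 2^\ell$, then union-bound over the $O(\log n)$ scales. At each scale the variance is controlled by a constant times $T(x_i) + T(x_j)$, giving deviations of order $\sqrt{T\log(\cdot)}$, which is absorbed into the bound $T^{3/4}$ once $T\ge 1$ and into the additive constant for $T\le 1$. The exponent $3/4$, rather than the sharp $1/2$, is calibrated precisely so that the chaining losses and the logarithmic factors from the union bound be absorbed with tight constants $\kappa_n^\prime$. This uniform two-parameter control is the principal technical step; everything else is routine once it is in hand.
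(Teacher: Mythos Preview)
Your treatment of \eqref{conv1}, \eqref{conv2}, and \eqref{tight1} is essentially the paper's: moment asymptotics for $\log\chi_r$, a functional CLT for the centered sums (the paper invokes a criterion from Ethier--Kurtz, Proposition~\ref{convprop}), and a union bound using the sharp lower tail of $\chi_r$. One small slip: you write that $\kappa_n$ may grow logarithmically in $n$, but ``tight'' means uniformly bounded in probability; your own estimate $P(\chi_r\le\sqrt r/t)\le Ct^{-(r-1)}$ is summable in $k$ uniformly in $n$ and actually delivers tightness, so the parenthetical is simply misstated.

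For \eqref{tight2} there is a genuine difference, and your sketch as written has a gap. The paper does \emph{not} treat the two-parameter family $M_{i,j}$ directly. It anchors the right endpoint at $n$, setting
\[
A_{x_j}^n=\sum_{k=j}^{n-1}\bigl(\log\tilde\chi_{k\beta}-\log\chi_{(k+a)\beta}\bigr)-\tfrac{a}{2}\log(j/n),
\]
so that the left side of \eqref{tight2} equals $A_{x_j}^n-A_{x_i}^n$ and it suffices to control the one-parameter process $j\mapsto A_{x_j}^n$. Then $Z_j^n=e^{\lambda A_{x_j}^n}/E[e^{\lambda A_{x_j}^n}]$ is a martingale in $n-j$, and Doob's maximal inequality together with the uniform estimate $\log E[e^{\lambda A_{x_j}^n}]=\tfrac{\lambda^2}{2}T(x_j)+O(1)$ (the paper's Claim~\ref{mgf}, proved via Stirling on ratios of gamma functions) yields the LIL-type bound: $\sup_j[A_{x_j}^n]^+/h(T(x_j))$ is tight with $h(t)=\sqrt{2t\log\log t}$. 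The dyadic decomposition is in $T(x_j)$, not in $i/j$, and the Doob step is what \emph{replaces} a union bound over the $O(n)$ values of $j$; this is exactly where tightness (no $\log n$ loss) comes from.

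Your plan---dyadic scales $i/j\asymp 2^\ell$, Bernstein, then union bound over the $O(\log n)$ scales---does not account for the supremum over pairs \emph{within} a fixed scale. At each scale there are still order $n$ such pairs; a further union bound over them forces an extra factor $\log n$ inside the square root. Since $T(x_j)\le\beta^{-1}\log n$, the inequality $\sqrt{T\log n}\le C\,T^{3/4}$ cannot hold uniformly in $j$, and $\kappa_n'$ would fail to be tight. The fix is precisely the anchoring-plus-martingale-maximal step above: once you reduce to one parameter and use Doob (or an equivalent maximal Bernstein/Freedman inequality for the partial-sum martingale), the rest of your outline goes through.
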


The first part of the lemma   ((\ref{conv1}) and (\ref{conv2})  together)
identifies the limiting operator $K_{\beta,a}$.  Namely, for $n \uparrow \infty$ 
it should be that  $k_{\beta,a}^n(x,y)$ approaches
\begin{equation}
\label{limitkernel}
  k_{\beta, a}(x,y) \equiv x^{-\frac{1+a}{2}}  \exp{ \left[  \int_y^x \frac{db_z}{\sqrt{\beta z}} \right]} 
      y^{a/2} \, \one_{y< x}.
\end{equation}      
The second part, or the bounds (\ref{tight1}) and (\ref{tight2}), provide the needed compactness
and more.  As we shall prove:

\begin{lemma} 
\label{HSlemma}
$K_{\beta, a}$ is almost surely Hilbert-Schmidt.  Also,
there exists a probability space on which all $K_{\beta, a}^n$ and $K_{\beta, a}$ are defined,
and such that  any sequence
of  the operators $K_{\beta, a}^n$ contains a subsequence which converges to $K_{\beta, a}$
in Hilbert-Schmidt norm with probability one.  In particular, for whatever $n \uparrow \infty$ we
can find an $n' \uparrow \infty$ along which
$$
  \lim_{n' \uparrow \infty}  \int_0^1 \int_0^1  | k_{\beta,a}^{n'}(x,y)(\omega) - k_{\beta,a}(x,y)(\omega)    |^2 \, dx \,dy = 0   
$$
almost surely.
\end{lemma}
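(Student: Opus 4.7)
The plan is to establish almost-sure Hilbert--Schmidt-ness of the limit operator, and then combine a Skorokhod coupling of Lemma \ref{Lawlemma} with the uniform bounds (\ref{tight1})--(\ref{tight2}) to produce a dominated-convergence argument that yields HS-norm convergence along subsequences. The two pieces correspond exactly to the two statements in the lemma.

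First I would check $\|K_{\beta,a}\|_{HS}^2 < \infty$ almost surely directly from (\ref{limitkernel}). Using the fact that $\int_y^x db_z/\sqrt{\beta z}$ has the same law as $(1/\sqrt{\beta})(\tilde{b}_{\log(1/y)} - \tilde{b}_{\log(1/x)})$ for a standard Brownian motion $\tilde{b}$, I would substitute $u=\log(1/x)$, $v=\log(1/y)$ to recast
\[
\|K_{\beta,a}\|_{HS}^2 \;=\; \int_0^\infty\!\int_u^\infty e^{a u - (a+1) v + \stb (\tilde{b}_v - \tilde{b}_u)}\,dv\,du.
\]
Since $a+1>0$ and $\tilde{b}_s = o(s)$ as $s \to \infty$ by the law of the iterated logarithm, the inner integral is almost surely finite and of order $e^{-(a+1)u + \stb \tilde{b}_u}/(a+1)$ for large $u$; the outer integrand then decays like $e^{-u}/(a+1)$ and integrates to a finite random value.

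For the subsequential HS convergence, Lemma \ref{Lawlemma} gives joint convergence in law of (\ref{conv1})--(\ref{conv2}) together with tightness of the constants in (\ref{tight1})--(\ref{tight2}). An application of Skorokhod's representation theorem---with one further subsequence extraction to upgrade the tight constants $\kappa_n,\kappa_n'$ to a.s.~convergent ones---places the discrete and continuum kernels on a common probability space where the driving processes converge almost surely in the Skorokhod topology, hence uniformly on compact subsets of $(0,1]$ (the Skorokhod limit being continuous). On this space $k_{\beta,a}^n(x,y) \to k_{\beta,a}(x,y)$ pointwise for every $0<y<x\leq 1$. The bounds (\ref{tight1})--(\ref{tight2}) combine, for $x\in[x_{i-1},x_i)$, $y\in[x_{j-1},x_j)$ with $j<i$, into
\[
|k_{\beta,a}^n(x,y)| \;\leq\; C\,\kappa_n\, x^{-(1+a)/2}\, y^{a/2}\, \exp\!\bigl[\kappa_n'\bigl(1 + T^{3/4}(x) + T^{3/4}(y)\bigr)\bigr].
\]
On the almost-sure event of subsequential convergence both $\kappa_n$ and $\kappa_n'$ are eventually dominated by a finite $M=M(\omega)$, yielding a deterministic pathwise majorant. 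Since $T^{3/4}(x)=((\log(1/x))/\beta)^{3/4}$ grows sub-polynomially, $e^{MT^{3/4}(x)} \leq C_\epsilon\, x^{-\epsilon}$ near the origin for any $\epsilon>0$; choosing $\epsilon<\min(a+1,1/2)$, the square of this majorant is integrable on $[0,1]^2$ thanks to $a>-1$. Dominated convergence in $L^2([0,1]^2)$ then upgrades pointwise convergence of kernels to HS-norm convergence.

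The main obstacle is managing the merely-tight (rather than uniformly bounded) constants $\kappa_n,\kappa_n'$ in the domination step; this is the reason the Skorokhod construction is used, and the reason the conclusion is stated along subsequences rather than for the full sequence. Once one passes to a subsequence on which $\kappa_n,\kappa_n'$ converge almost surely on the Skorokhod space, the majorant above becomes genuinely sample-path deterministic, and the rest---pointwise convergence of kernels, the sub-polynomial size of the $T^{3/4}$ exponential, and the endpoint integrability controlled by $a > -1$---is routine.
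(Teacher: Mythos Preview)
Your proposal is correct and follows essentially the same route as the paper: Skorokhod coupling to obtain almost-sure pointwise convergence of the kernels together with a.s.\ convergence of the tight constants $\kappa_n,\kappa_n'$, then a dominated-convergence argument using the majorant furnished by (\ref{tight1})--(\ref{tight2}) and the law of the iterated logarithm for the limiting kernel. The paper packages the integrability of the majorant into Proposition~\ref{dominating} (the kernel $k_C$) rather than your $e^{MT^{3/4}(x)}\le C_\epsilon x^{-\epsilon}$ estimate, but these are equivalent; note only that your stated threshold $\epsilon<\min(a+1,1/2)$ should read $\epsilon<\min((a+1)/2,1/4)$, and that replacing the mesh values $x_i,x_j$ by $x,y$ in the displayed bound needs a word of care at $j=1$ when $a>0$ (the paper glosses over the same point).
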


Granted this we may complete the proof of the main result.

\begin{proof}[Proof of Theorem \ref{mainthm}]
Working on the probability space promised in Lemma \ref{HSlemma},
the argument is reduced to a deterministic setting.     Start with the scaled minimal 
$(\beta,a)$-Laguerre eigenvalue 
$$ 
  n \lambda_0(n) =  \inf_{||v||_{\ell_2} = 1}  \langle v, n M_{\beta,a}  M_{\beta, a}^{T} v \rangle  
  =    \Bigl(  \sup_{||f||_{L^2} =1}   \langle f, ( K_{\beta, a}^n)^{T}  K_{\beta, a}^n f \rangle  \Bigr)^{-1}
   =  ||  (K_{\beta, a}^n)^{T}  K_{\beta, a}^n ||^{-1},
$$
where $|| \cdot ||$ is the $L^2 \mapsto L^2$ operator norm, and the final equality holds
simply because 
$(K_{\beta, a}^n)^{T}  K_{\beta, a}^n $ is non-negative symmetric.  Assume for the
moment that, as claimed, $K_{\beta, a}$ is almost surely Hilbert-Schmidt.  Then 
$ K_{\beta, a}^{T}  K_{\beta, a} $ is non-negative symmetric and compact (trace class
even) with a well defined maximal eigenvalue also equal to the norm 
$|| K_{\beta, a}^T K_{\beta, a}  ||$.
For short, we notate $   \hat{\Lambda}_0 >
\hat{\Lambda}_1 > \cdots $ the eigenvalues of $ K_{\beta, a}^{T}  K_{\beta, a} \equiv B$, and
similarly write $\{ \hat{\Lambda}_k^n \}$  for the (decreasing)  eigenvalues of $ (K_{\beta, a}^{n})^{T}  K_{\beta, a}^{n} \equiv B_n$. 
The simplicity of the limiting eigenvalues is also assumed here; it will be deduced from the differential form
of the eigenvalue problem introduced in the next Section, see  {\ref{eq:PsiSystem}} and the surrounding
discussion.

Next,  for whatever  sequence $n \uparrow \infty$, Lemma \ref{HSlemma}  
allows a choice of subsequence along which 
$ || K_{\beta, a}^{n'} - K_{\beta, a} ||_{HS}  \rightarrow 0$.   The same holds for the transposes,
and hence $ B_{n'} $ converges strongly to 
$  B $.  It follows that the norms themselves converge along this subsequence:
$  \hat{\Lambda}_0^n  =  || B_{n'} || \rightarrow || B ||  =  \hat{\Lambda}_0 $ with probability
one.  But this is to say that for any sequence of the original eigenvalues $ n \lambda_0(n)$,
there  exists a subsequence along which  these points converge almost surely to 
$1/ \hat{\Lambda}_0 $.  That is of course equivalent to the full convergence
statement.
    
As to $ n \lambda_1, n \lambda_2,...$,  we first show that the convergence 
(along perhaps a further 
subsequence) of the ground state eigenvectors
is a by-product of the above.     Define $\{f_n\}$ and $f$ in  $L^2[0,1]$ with unit norm  by
$$
   \langle f_n , B_n f_n \rangle =   \hat{\Lambda}_0^n,  \   \   \   
    \langle f , B f \rangle =   \hat{\Lambda}_0.
$$
Remaining in the introduced setting, we have $|| B_n f_n ||_{L^2} \rightarrow || B f ||_{L^2}$.
Also,  being uniformly bounded
in $L^2$,  $f_n$ has a weakly convergent subsequence:  $f_{n'}  \rightharpoonup
f_{\infty}$.  Then, for  any $\phi \in L^2$, 
$$
   \langle{ \phi, B_n f_n  - B f_{\infty} \rangle}  =  \langle{ \phi, (B_n - B) f_n \rangle}  + 
            \langle{ B \phi,   f_n - f_{\infty} \rangle}  
$$
tends to zero (the first term by norm convergence, the second by boundedness of $B$).   
Having  weak convergence ($B_n f_n \rightharpoonup B f_{\infty}$)
 plus convergence of its norm, we conclude there
is a strongly convergent  subsequence of $\{B_n f_n\}$.
Coupled with $ B_n f_n  = \hat{\Lambda}_0^n f_n$  
and $\hat{\Lambda}_0^n \rightarrow \hat{\Lambda}_0$, this  implies a strongly convergent 
subsequence for the $\{f_n\}$ themselves, which by continuity can only wind up at $f$. 

Finally,  place yourself along this sequence where $|| f_n - f ||_{L^2} \rightarrow 0$, and 
denote
by $P_{f_n}$ the projection onto the orthogonal complement of $f_n$ in $L^2$.  At once 
we find that $ P_{f_n} B_n P_{f_n} $ converges strongly to $ P_f B P_f$ (with obvious
notation), and so  $\hat{\Lambda}_1^n =  ||   P_{f_n} B_n P_{f_n} || \rightarrow || P_f B P_f ||
= \hat{\Lambda}_1$.  The implication for $n \lambda_1$ is clear, and an induction argument
extends the picture to the almost sure convergence of any finite number of  Laguerre eigenvalues.

Reflect upon the fact that we have proved convergence (in law) of say $ \{ (n \lambda_k)^{-1}  \}$
for $k = 0, \dots, m$ to  the top $m$ eigenvalues of the integral operator 
$B = K_{\beta, a}^T K_{\beta,a}^{}$ which we now write out.  In particular, its spectral problem reads
\begin{eqnarray}
\label{fullfirstop}
   f(x) & =  & \lambda   \int_x^1 x^{a/2} e^{\int_x^y \frac{db_s}{\sqrt{\beta s}}  }  y^{-(a+1)}  \int_0^y  
    e^{\int_z^y  \frac{db_s}{\sqrt{\beta s}}}  z^{a/2}   \, f(z) \,  dz \,   dy   \\
         &  =  &   \lambda \int_0^1 (x y)^{a/2}  \left( \int_{x \vee y}^1 e^{-2 \int_z^1 \frac{db_s}{\sqrt{\beta s}}  } 
                                                \, z^{-(a+1)}  dz \right)    e^{\int_x^1 \frac{db_s}{\sqrt{\beta s}}  } \,  e^{\int_y^1 \frac{db_s}{\sqrt{\beta s}}  } f(y) \, dy, \nonumber
\end{eqnarray}
after an integration by parts.  Again, we seek here an $f \in L^2[0,1]$, which inherits the continuity
and vanishing of the kernel at $x=1$.

To recover the advertised limit operator (\ref{Greensfunction}), make the substitution
$g(x) $ $=$  $x^{-a/2} e^{\int_x^1 \frac{db_s}{\sqrt{\beta s}}  }  f(x) $ in conjunction with the time change $ \int_x^1 
s^{-1/2} {db_s}  =   \hat{b}(\log (1/x))$  with a new 
Brownian motion $\hat{b}$  to express (\ref{fullfirstop}) in the equivalent way:
$$
  g(x) = \lambda \int_0^1   \left(  \int_{x \vee y}^1  e^{- \frac{2}{\sqrt{\beta}}  \hat{b}(\log 1/z)} 
           z^{-(a+1)} \,  dz \right)  g(y)  \, y^a  e^{ \frac{2}{\sqrt{\beta}}  \hat{b}(\log 1/y) }  \, dy.
$$
With $f \in L^2[0,1]$, $g$ resides in $ L^2([0,1], {m})$ for $m(dx) = 
x^{a} e^{ \frac{2}{\sqrt{\beta}}  \hat{b}(\log x^{-1}) } \, dx$.  Last,  the change of variables 
$(x,y) \mapsto (e^{-x}, e^{-y})$ will produce  the form of (\ref{Greensfunction}) quite exactly, along
with transforming the Dirichlet condition at one ($f(1) = g(1)= 0$) into that at the origin $(\psi(0) \equiv
(g \circ \exp)(0) = 0$).
\end{proof}

\subsection*{Estimates}

Before establishing Lemmas \ref{Lawlemma} and \ref{HSlemma} we make the simple observation:

\begin{proposition} 
\label{dominating}
For any constant $C$ and $a > -1$, the integral operator on $L^2[0,1]$ with kernel
$$
   k_C(x,y) =  C \exp{\Bigl[  C (\log(1/x))^{3/4} +  C (\log(1/y))^{3/4}  \Bigr]} \,   \frac{y^{a/2}}{x^{(a+1)/2}} \,  \one_{ y < x} 
$$
is Hilbert-Schmidt. 
\end{proposition}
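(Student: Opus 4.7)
The plan is to estimate the Hilbert--Schmidt norm
$$\|k_C\|_{HS}^2 = \int_0^1\!\int_0^x C^2 \exp\!\left[2C(\log(1/x))^{3/4} + 2C(\log(1/y))^{3/4}\right] \frac{y^a}{x^{a+1}}\, dy\, dx$$
directly, exploiting the fact that the exponential factor is ``slowly varying'' and can be absorbed into any small negative power.

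The key analytic lemma I would first record is that, for every $\epsilon>0$, the function $u \mapsto 2Cu^{3/4} - \epsilon u$ attains a finite maximum on $[0,\infty)$ (the critical point is explicit). Setting $u = \log(1/t)$ gives a constant $M_\epsilon = M(C,\epsilon)$ with
$$\exp\!\left[2C(\log(1/t))^{3/4}\right] \le M_\epsilon\, t^{-\epsilon} \qquad \text{for all } t \in (0,1].$$
Applying this estimate to each of the two exponential factors yields
$$|k_C(x,y)|^2 \;\le\; C^2 M_\epsilon^{2}\, \frac{y^{a-\epsilon}}{x^{a+1+\epsilon}}\, \one_{y<x}.$$

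Next, I would choose $\epsilon < \min(a+1,\,1/2)$, possible since $a>-1$. Then the inner integral equals $\int_0^x y^{a-\epsilon}\,dy = x^{a+1-\epsilon}/(a+1-\epsilon)$, and the outer integral collapses to $\int_0^1 x^{-2\epsilon}\,dx$, which is finite because $2\epsilon<1$. Chaining the bounds gives $\|k_C\|_{HS}<\infty$, as required.

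There is no real obstacle here; the only thing to verify is that the $\epsilon$-budget is compatible, i.e.\ that a single choice of $\epsilon$ simultaneously gives integrability near $y=0$ (requires $\epsilon < a+1$ to control the $y^a$ factor after the exponential is absorbed) and near $x=0$ (requires $2\epsilon < 1$ for the residual power of $x$). Both constraints hold for every admissible $a > -1$, so the estimate goes through uniformly.
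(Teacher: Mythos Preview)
Your argument is correct. The paper takes a slightly different but equally short route: it substitutes $x=e^{-s}$, $y=e^{-t}$ to rewrite the Hilbert--Schmidt integral as
\[
C^2 \int_0^\infty e^{2Cs^{3/4}+as}\int_s^\infty e^{2Ct^{3/4}-(a+1)t}\,dt\,ds,
\]
and simply observes this is finite precisely when $a>-1$. Your version stays in the original variables and absorbs the sub-exponential factor into a small power via $e^{2Cu^{3/4}}\le M_\epsilon e^{\epsilon u}$. Both encode the same fact---that $u^{3/4}$ is negligible against any linear term $\epsilon u$---but the substitution makes this comparison immediate without an auxiliary $\epsilon$, while your approach has the small advantage of yielding an explicit elementary bound and making the two separate integrability constraints (near $y=0$ and near $x=0$) visible.
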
 

\begin{proof} The change of variables $x = e^{-s}$ and $y = e^{-t}$ employed just above produces
$$
  \int_0^1 \int_0^1 | k_C(x,y)|^2 \, dx dy =  C^2 \int_0^{\infty}   e^{ 2 C s^{3/4} + a s }  \int_s^{\infty} e^{ 2 C t^{3/4} - (a+1) t} \,  dt \,  ds , 
$$
and the latter is clearly finite if (and only if) $a > -1$.
\end{proof}

\begin{proof}[Proof of Lemma \ref{HSlemma}]  Now taking Lemma \ref{Lawlemma} for granted, we can find a subsequence over which
we have the joint convergence in law,
\begin{eqnarray}
\label{jointly} 
      \frac{\sqrt{n \beta}}{ \chi_{(\lfloor nx \rfloor+a) \beta} }  &  \Rightarrow  & \left( \frac{1}{\sqrt{x}}, \,  0 < x \le 1 \right),  \nonumber  \\ 
   \sum_{k = \lfloor ny \rfloor}^{  \lfloor nx \rfloor }  ( \log  \tilde{\chi}_{k \beta}  - \log {\chi_{(k+a)\beta}} )  
      &  \Rightarrow  &    \left( (a/2) \log (y/x) +  \int_y^x \frac{db_z}{\sqrt{\beta z}}, \,  0 < y \le x  < 1 \right),  \\
      \kappa_n^{}, \kappa_n^{\prime}  & \Rightarrow & \kappa, \kappa^{\prime}.  \nonumber   
\end{eqnarray}
Then, Skorohod's representation theorem (Theorem 1.8, Chapter 2 of \cite{EK}) furnishes a probability space on which 
each of the above occurs with probability one.  The first two items of (\ref{jointly}) take place a.e. in  $(0,1]$, and so on this
new space it holds
\begin{equation}
\label{strongpath}
   P \Bigl(  \lim_{n \uparrow \infty} k_{\beta,a}^n(x,y)(\omega) = k_{\beta,a}(x,y)(\omega)  \mbox{ for }  a.e. \, x, y \in [0,1]^2 \Bigr) = 1.
\end{equation}
That
$$
    \int_0^1 \int_0^1 | k_{\beta,a}^n(x,y)(\omega) - k_{\beta,a}(x,y)(\omega)  |^2 \, dx dy    \rightarrow 0  \ \  a.s.
$$
will follow if we can supply an a.s.\ finite constant $C(\omega)$ such
\begin{equation}
\label{doms} 
   \sup_{n > 0} k_{\beta,a}^n(x,y)(\omega) \le k_{C(\omega)}(x,y)  \  \mbox{ and }  \    k_{\beta,a}(x,y)(\omega) \le k_{C(\omega)}(x,y) 
\end{equation}
for almost all $x,y \in [0,1]$ and $\omega$.

Again by Lemma \ref{Lawlemma}, for each $n$ it holds
\begin{equation}
\label{knbound}
  k_{\beta,a}^n(x,y)(w) \le  \kappa_n(\omega) x_i^{-(a+1)/2} y_j^{a/2}  \exp{ \left[  \kappa_n^{\prime}(\omega) ( 1 + T^{3/4}(x_i) + T^{3/4}(y_j) ) \right] } 
\end{equation}
where $x \in [x_i. x_{i+1}), y \in [y_j, y_{j+1})$.  But now we are allowed to assume that both $\kappa_n$ and $\kappa_n^{\prime}$ converge, 
and thus are bounded almost surely by say $ 2 ( \kappa \vee \kappa')$ for sufficiently large $n$. The continuity of the functions $x \mapsto T(x)$
and $ x \mapsto x^{p}$ (on $(0,1]$) then enables us to fit the right hand side of $(\ref{knbound})$ under a fixed  $k_{C}$ independently of $n$.
For the limit kernel,  $k_{\beta,a}(x,y)(\omega)$ simply note that its exponent could have been expressed from the start as 
$$
     \int_y^x \frac{db_z}{\sqrt{\beta z}}  =  \frac{1}{\sqrt{\beta}}  \Bigl[  \tilde{b} ( \log (1/x) ) -  \tilde{b} ( \log(1/y) ) \Bigr].
$$
(The equality is in law with a different Brownian motion living on the same probability space.)  By the law of the iterated logarithm,
$ \tilde{b}(a) \le c(\omega) (1 + [ a \log \log(1+ a)]^{1/2}) $ for a  random $c(\omega)$ and all $a > 0$, and certainly $[a \log \log(1+ a)]^{1/2}
\le c' a^{3/4}$ with a (non-random) $c'$ and all $a$ large enough.  Thus, the second half of (\ref{doms}) holds with $C(\omega) \le
 \beta^{-1/2}  c' \, c(\omega)$,
and the proof is complete.

Note here we immediately passed to a fixed subsequence and then chose a favorable probability space, while the statement of the lemma
was worded with the convergence of the $(\beta, a)$-Laguerre eigenvalues in mind. That is,  build all $k_{\beta,a}^n$, each tied to
a   $(\beta, a)$-Laguerre eigenvalue, on the same space as $k_{\beta, a}$ and then note for whatever $n \uparrow \infty$ there is a subsequence
along which everything above holds.  Either way the upshot is the same.
\end{proof}

Turning to the proof of Lemma \ref{Lawlemma}, we record without proof the following facts.

\begin{proposition}
\label{moments}
For $\chi_r$ a chi random variable of index $r > 0$, 
\begin{equation}
\label{chimoment}
  E [ \chi_r^p ] = 2^{p} \frac{\Gamma\left(\frac{r+p}{2}\right)}{\Gamma(r/2)}
\end{equation}
for any $p > - r$. Also, as $r \rightarrow \infty$,
\begin{eqnarray}
\label{logmoment}
  E[ \log \chi_r ] = \frac{1}{2} \log r - \frac{3}{2r}  + O(1/r^2), \  \  Var[ \log \chi_r ] = \frac{1}{2r} + O(1/r^2) , 
\end{eqnarray}
while $   E [  (\log \chi_r - E \log \chi_r )^{2m} ]  = O(1/r^{m})$ 
for  positive integer $m$.
\end{proposition}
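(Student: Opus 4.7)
The three items flow from a single explicit Mellin transform computation. To prove the moment formula (\ref{chimoment}), I would start from the chi density $f_r(x) = x^{r-1} e^{-x^2/2}/(2^{r/2-1}\Gamma(r/2))$ on $[0,\infty)$ and substitute $u = x^2/2$ in the integral $E[\chi_r^p] = \int_0^\infty x^p f_r(x)\,dx$. This converts it into a standard Gamma integral and yields a closed form directly. The condition $p > -r$ is exactly the integrability requirement at the origin, and nothing else is needed.

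For the log-moment expansions, I would observe that the formula just obtained is simultaneously the moment generating function of $\log \chi_r$, since $E[e^{p\log \chi_r}] = E[\chi_r^p]$. Consequently the cumulant generating function reads
\[
  K(p) \;=\; \frac{p \log 2}{2} \;+\; \log \Gamma\!\left(\frac{r+p}{2}\right) - \log \Gamma\!\left(\frac{r}{2}\right),
\]
and differentiating at $p=0$ expresses the cumulants of $\log \chi_r$ directly in terms of the digamma and polygamma functions: the first cumulant is $\kappa_1 = \frac{1}{2}\log 2 + \frac{1}{2}\psi(r/2)$ and for $k \ge 2$ one has $\kappa_k = 2^{-k}\psi^{(k-1)}(r/2)$. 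The classical large-$x$ expansions $\psi(x) = \log x - \frac{1}{2x} + O(x^{-2})$ and $\psi^{(k-1)}(x) = O(x^{-(k-1)})$ then produce the claimed asymptotics for $E[\log \chi_r]$ and $Var[\log \chi_r]$ in (\ref{logmoment}).

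For the centered moment bound, I would convert cumulants into moments using the standard combinatorial identity $\mu_{2m} = \sum_\pi \prod_{B \in \pi} \kappa_{|B|}$, where $\pi$ ranges over set partitions of $\{1,\ldots,2m\}$ whose blocks all have size at least $2$ (since $\mu_{2m}$ is centered, blocks of size $1$ are absent). Each factor $\kappa_{|B|}$ is $O(r^{-(|B|-1)})$ by the polygamma asymptotics above, so a partition with $j$ blocks contributes $O(r^{-(2m-j)})$. Since every block has size at least $2$ we have $j \le m$, and hence every term is $O(r^{-m})$, with the extremal contribution being the all-pair matching term $\kappa_2^m$.

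There is no serious obstacle. The only step requiring care is tracking numerical constants in the $1/r$ correction to $E[\log \chi_r]$, which must be read off directly from the expansion of $\psi(r/2)$; the variance constant follows likewise from $\psi'(r/2) = 2/r + O(r^{-2})$. Everything else is bookkeeping on the combinatorial side.
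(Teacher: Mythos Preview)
The paper explicitly states this proposition without proof (``we record without proof the following facts''), so there is nothing to compare against; your argument is correct and is the standard one. As a side note, your cumulant generating function $K(p)=\tfrac{p}{2}\log 2+\log\Gamma\!\left(\tfrac{r+p}{2}\right)-\log\Gamma\!\left(\tfrac{r}{2}\right)$ corresponds to $E[\chi_r^p]=2^{p/2}\,\Gamma((r+p)/2)/\Gamma(r/2)$, which is the correct value (check $p=2$); the exponent $2^p$ printed in (\ref{chimoment}) appears to be a misprint for $2^{p/2}$, and likewise your digamma expansion yields $E[\log\chi_r]=\tfrac12\log r-\tfrac{1}{2r}+O(r^{-2})$ rather than the $-\tfrac{3}{2r}$ printed in (\ref{logmoment}). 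Neither constant is actually used elsewhere in the paper, so these discrepancies are harmless.
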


\begin{proposition}[After Theorem 1.3, Chapter 7 of \cite{EK}]
\label{convprop}
 Let $y_{n,k}$ be a sequence of mean-zero processes starting at $0$
with independent increments $\Delta y_{n,k}$.  Assume, 
\begin{equation}
\label{convassumps}
   n E  (\Delta y_{n,k})^2  = f(k/n) + o(1), \  \  n E  (\Delta y_{n,k})^4  = o(1)
\end{equation}
uniformly for $k/n$ in compact sets of $[0, T)$ with a continuous $f \in L_{loc}^1[0,T)$.  Then $y_n(t) = y_{n, [nt]} \Rightarrow \int_0^t {f^{1/2}(s)} db(s)$
with a standard Brownian motion $b$ (in the Skorohod topology).   
\end{proposition}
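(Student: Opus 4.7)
The plan is to deduce this directly from the cited Ethier--Kurtz martingale functional central limit theorem. Since the increments $\Delta y_{n,k}$ are independent and mean-zero, $y_n(t) = y_{n,[nt]}$ is a discrete-time martingale with respect to the natural filtration, and its predictable quadratic variation is just the sum of variances, so the main task reduces to verifying the two hypotheses of that theorem: convergence of the angle bracket and smallness of the jumps.

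First I would identify the limiting bracket. By the second-moment assumption in (\ref{convassumps}),
$$
  \langle y_n, y_n \rangle(t) = \sum_{k=1}^{[nt]} E(\Delta y_{n,k})^2 = \frac{1}{n}\sum_{k=1}^{[nt]} f(k/n) + o(1),
$$
which is a Riemann sum for $\int_0^t f(s)\,ds$. Since $f$ is continuous and the $o(1)$ is uniform on compacts of $[0,T)$, the left-hand side converges to $\int_0^t f(s)\,ds$ uniformly on each subinterval $[0,t_0]\subset[0,T)$. I would also note that the random quadratic variation $\sum_{k\le[nt]}(\Delta y_{n,k})^2$ concentrates on this deterministic limit, using the fourth-moment bound to control its variance: $\mathrm{Var}(\sum (\Delta y_{n,k})^2) \le \sum E(\Delta y_{n,k})^4 = o(1)\cdot t$.

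Second I would verify jump negligibility. Markov's inequality combined with a union bound gives, for any $\epsilon>0$,
$$
  P\Bigl(\max_{k\le[nt]} |\Delta y_{n,k}|>\epsilon\Bigr) \le \epsilon^{-4}\sum_{k=1}^{[nt]} E(\Delta y_{n,k})^4 \le \epsilon^{-4}\,t\cdot o(1),
$$
so the maximum jump tends to zero in probability; the same fourth-moment bound, via Cauchy--Schwarz, yields the Lindeberg-type estimate $\sum_k E[(\Delta y_{n,k})^2\one_{|\Delta y_{n,k}|>\epsilon}]\to 0$. With these ingredients Theorem 1.3, Chapter 7 of \cite{EK} produces $y_n\Rightarrow M$ in Skorohod topology on $D[0,t_0]$ for each $t_0<T$, with $M$ a continuous Gaussian martingale of deterministic quadratic variation $\int_0^t f(s)\,ds$. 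Applying Dambis--Dubins--Schwarz represents $M$ as $\int_0^t f^{1/2}(s)\,db(s)$ for a standard Brownian motion $b$.

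The proof is largely bookkeeping, so I do not expect a serious obstruction. The one point demanding a little care is the restriction to compact subintervals of $[0,T)$: since $f$ is only locally integrable, one does not get convergence on $[0,T]$ in general, but the weak limits on $[0,t_0]$ are consistent as $t_0\uparrow T$, and this is enough for the applications in the proof of Lemma \ref{Lawlemma}.
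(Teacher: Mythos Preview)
The paper does not actually prove this proposition: it is one of two facts explicitly ``record[ed] without proof'' just before the proof of Lemma~\ref{Lawlemma}, the tag ``After Theorem~1.3, Chapter~7 of \cite{EK}'' serving as the entire justification. Your sketch is exactly the verification the paper leaves to the reader---checking that independent mean-zero increments give a martingale array, that the bracket converges to $\int_0^t f$ via the second-moment hypothesis as a Riemann sum, and that the fourth-moment hypothesis handles the Lindeberg/jump condition---so there is nothing to compare and your argument is correct.
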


\begin{proof}[Proof of Lemma \ref{Lawlemma}]
Start with (\ref{conv2}).
By the first estimate of (\ref{logmoment}),
$$
  \lim_{n \rightarrow \infty} \sum_{k= \lfloor ny \rfloor}^{ \lfloor nx \rfloor } ( E \log \tilde{\chi}_{k\beta}  - E \log \chi_{(k+a) \beta} )  = \frac{a}{2} \log(y/x)
$$
uniformly for $y < x$ restricted to compact sets of $(0,1]$.  Thus, for (\ref{conv2}) it is enough to demonstrate the weak convergence
\begin{equation}
\label{newconv2}
     \sum_{k = [nx]}^n  ( \log \chi_{(k+c) \beta} - E \log \chi_{(k+c) \beta} ) \Rightarrow    \int_x^1 ( 2 \beta z)^{-1/2} \, {db(z)}
\end{equation}
where $c$ is any fixed number.  Indeed, the exponent of the discrete kernel is comprised of two such independent sums,
and the promised limit will follow as $b_1 + b_2 = \sqrt{2} b_3$ in law for independent Brownian motions $b_1, b_2, b_3$.
Now refer to Proposition \ref{convprop} and  view the processes on the left of 
(\ref{newconv2}) as starting from  $0$ at $x = 1$ and evolving toward $x = 0$ (or take $t = 1-x$ in the proposition). Then,
the second estimate of (\ref{logmoment}) yields the first part of (\ref{convassumps}) with $f(t) = 1/(2\beta t)$; the estimate right after  (\ref{logmoment})
with $m=2$ produces the second half of (\ref{convassumps}) as $x$ is always $> 0$.  This finishes the job.

The convergence (\ref{conv1}) is easier. For any fixed $x \in (0,1]$, it is just an instance of the law of large  numbers.  The tightness required  to ensure
process level convergence is  also elementary: via (\ref{chimoment}) one can obtain the increment bound 
$$ 
   E  \Bigl(  \sqrt{\frac{(r+1)\beta}{\chi_{(r+a+1)\beta}^2}}  -  \sqrt{\frac{r\beta}{\chi_{(r+a)\beta}^2} }  \, \Bigr)^2 = O(1/r^2)
$$
which more than suffices.  While here we  dispense of  (\ref{tight1}).  
First use the sum bound,
$$
  P \left(  \sup_{1 \le k \le n} \frac{ \sqrt{k \beta}}{\chi_{(k+a)\beta}} > M \right) \le \sum_{k=1}^{n} P \left(   \frac{\chi_{(k+a)\beta}}{\sqrt{k \beta}}     < \frac{1}{M} \right).
$$
Then, 
 employing the explicit density $P(\chi_r \in ds) =  \frac{{2}^{1-r/2}}{\Gamma(r/2)} s^{r-1} e^{-s^2/2} \, ds$, one can perform a  Laplace-type
 estimate to find the $k$-th term on the right hand side is upper bounded by $C (\sqrt{e}/M)^k$ with $C$ depending only on $\beta$.  Since
 $\sum_{k=1}^{\infty} (\sqrt{e}/M)^k$ may be made arbitrarily small by choice of $M$, 
 the desired tightness of the  random variables 
 $\sup_{k\le n} (  \sqrt{k \beta} / \chi_{(k+a)\beta} )$ follows.

 The final piece, or (\ref{tight2}), is the most elaborate but really 
 comes down to reworking the standard proof of the upper bound in the law of the iterated logarithm.
 Define, 
 $$
    A_{x}^n = \sum_{k=j}^{n-1} ( \log \tilde{\chi}_{k \beta} - \log \chi_{(k+a) \beta} ) - \frac{a}{2} \log( j/n)
 $$
 for $x \in [x_j, x_{j+1})$, and $h(x) = [ 2x \log \log x]^{1/2}$.  We will in fact show that
 \begin{eqnarray}
 \label{lastgoal}
     \sup_{1 \le j \le n-1}  \Bigl(  ( A_{x_j}^n \vee 0) / h(T(x_j)) \Bigr)  \mbox{  are tight in distribution,} 
 \end{eqnarray}
 where again  $T(x) = \frac{1}{\beta} \log \frac{1}{x}$.
This is stronger than what is claimed.  
 
 Set
 $$
    Y_j^n \equiv \exp ( A_{x_j}^n ) =  \prod_{k=j}^{n-1}  \frac{\tilde{\chi}_{k \beta}}{\chi_{(k+a)\beta}}   \left( \frac{k+1}{k} \right)^{a/2},
    \  \mbox{ and }  Z_j^n  \equiv  ( Y_j^n )^{\lambda}  E [ ( Y_j^n )^{\lambda} ]^{-1}
 $$
 with a small positive $\lambda$ (the precise conditions on $\lambda$ follow shortly).  
 The sequence $j \mapsto
 Z_{n-j}^n$ is a martingale for  $j=1,2,\dots$ with $E[ Z_j^n] = 1$ for all $j$. Hence, by Doob's inequality  
$$
  P \Bigl( \max_{ \ell \le j \le n-1} Z_j^n  \ge e^{\lambda b}  \Bigr) \le e^{-\lambda b},
$$
or  
\begin{equation}
\label{s1}
   P \Bigl( \max_{\ell \le j \le n-1} ( \lambda A_{x_j}^n -  \log E  [  \exp ( \lambda A_{x_j}^n ) ] )  \ge  b \Bigr) \le e^{-\lambda b}
\end{equation}
for $b > 0$.  

For the next move we need an estimate on the moment generating functions of $A_{x_j}^n$, the proof
of which we will return to at the end of the section.

\begin{claim} 
\label{mgf}
For all  $\lambda> 0$ sufficiently small $(\lambda < (\beta/2) [ (1+a) \wedge 1]$ will do$)$,
\begin{equation}
\label{eqmgf}
         E\Bigl[  e^{\lambda A_{x_j}^n} \Bigr] = \exp \Bigl\{ \frac{\lambda^2}{2\beta} \log (1/x_j) + \Theta_n(j) \Bigr\}
\end{equation}
with $|\Theta_n(j)| \le  C$ for constant $C = C(a,\beta)$.
\end{claim}

Using (\ref{eqmgf}) in (\ref{s1}), we have 
$$
   P \Bigl( \sup_{x_{\ell}  \le t < 1} \left\{  A_{t}^n -   \frac{\lambda}{2 \beta}  \log(1/t) +  \frac{1}{\lambda} \Theta_n(nt) \right\}  \ge  b/\lambda \Bigr) \le e^{-\lambda b}
$$
with $\Theta_n(t)$ understood via interpolation.  
Now choose $\theta > 1$, a positive constant $M$ and set $\lambda = M \theta^{-m} h(\theta^m)$,
$b = M h(\theta^m)/2$. (To choose $M$ large one  must take $\theta$ large as well to respect
the condition on $\lambda$ set down in Claim \ref{mgf}.)  The previous display will then
imply
\begin{equation}
\label{s2}
  P \left(  \sup_{\theta^m < T(t) < \theta^{m+1} }  A_t^n  \ge (M+1) h(\theta^m)   \right) \le (m \log \theta)^{-M^2}.
\end{equation}
Here we have used the uniform bound on $\Theta_n(t)$ to fit $\lambda^{-1} \Theta_n(t)$ under 
$h(\theta^m)$ by choice of $\theta$ and so $M$. In particular, $\lambda^{-1}$ $= M^{-1} \theta^m h^{-1}(\theta^m)$
$\le M^{-1} h(\theta^m)$ if $\log \log \theta > 1/2$.

Finally return to the goal (\ref{lastgoal}), re-expressed as seeking a bound of type 
$$
 P \left( \sup_{0 < t < 1} [A_t^n]^{+}/ h(T(t))  > N \right) \le \varepsilon(N) \mbox{  where } 
 \varepsilon(N) \downarrow 0 \mbox{ as } N \uparrow \infty.
$$
Note in addition that the supremum inside the probability over any truncated range $x < t < 1$ (for $x >0$, rather
than $0<t<1$) poses no problem.  Indeed, the process $ t \mapsto A_t^n$ has already  been shown to be convergent
in that regime.  On the other hand, the troublesome tail is bounded by
$$
 \sum_{m=1}^{\infty}  P \left(  \sup_{\theta^m < T(t) < \theta^{m+1} }  A_t^n / h(T(t))  \ge N  \right) \le 
 \sum_{m=1}^{\infty} ( m \log \theta)^{-(N-1)^2}
$$
with the aid of (\ref{s2}), completing the proof.
\end{proof}

\begin{proof}[Proof of Claim \ref{mgf}] By (\ref{chimoment}), the left hand side of (\ref{eqmgf}) equals
$$
    \prod_{k=j}^{n-1}  \frac{ \Gamma \left( \frac{k\beta + \lambda}{2} \right)}{ \Gamma \left( \frac{k \beta}{2} \right) } \frac{ \Gamma \left( \frac{(k +a) \beta - \lambda}{2} \right)}{ \Gamma \left( \frac{(k+a) \beta}{2} \right) }   \left( \frac{k+1}{k} \right)^{\lambda a/2}.
$$
Taking logarithms, we must estimate the sum $\sum_{k=j}^{n-1} s_k$ where
\begin{eqnarray}
\label{term}
  s_k &  =  & 
  \log  \Gamma \left( \frac{k\beta + \lambda}{2} \right) - \log  \Gamma \left( \frac{k \beta}{2} \right)   + 
  \log  \Gamma \left( \frac{(k +a) \beta - \lambda}{2} \right) - \log \Gamma \left( \frac{(k+a) \beta}{2} \right) 
    \nonumber \\      
  &  &  + \frac{\lambda a}{2} \log \left(1 +  \frac{1}{k} \right). 
\end{eqnarray}  

Introduce Stirling's approximation in the form  
$$
  \left| \log \Gamma(z) - \left(z-\frac{1}{2}\right) \log{z} +z - \frac{\log{2\pi}}{2} - \frac{1}{12 z} \right| \leq \frac{c}{z^{2}}.
$$
The $O(z^{-2})$ error term produces a constant multiple of $k^{-2}$ when applied in $(\ref{term})$. Differences
such as $ (k \beta/2)^{-1} - ((k \beta -\lambda)/2)^{-1}$ and the like stemming from the $1/(12z)$ terms are similarly bounded.  When summed, both contributions
produce constants which are then absorbed into the $\Theta_n$.  Also, the constant and $z$-terms obviously
cancel throughout the log-gamma expressions when the above estimate is applied in (\ref{term}). 

Move to the terms of type $(z-1/2)\log z$. A bit of algebra will lead to
\begin{eqnarray*}
  s_k &  =  &  \frac{k\beta -1}{2} \log \left[ \left(1 + \frac{\lambda}{k \beta} \right) \left( 1 - \frac{\lambda}{(k+a)\beta} \right)
    \right]
                   - \frac{\lambda}{2} \log \left[  \left(1 + \frac{\lambda}{k \beta} \right) / \left( 1 - \frac{\lambda}{(k+a) \beta} \right)
                 \right] \\      
  &  &  + \frac{a \beta}{2} \log \left(1 - \frac{\lambda}{(k+a) \beta} \right) - \frac{\lambda}{2} \log \left(1 + \frac{a}{k} \right)
      - \frac{\lambda a}{2} \log \left(1 +  \frac{1}{k} \right)  + O \left( \frac{1}{k^2} \right).  
\end{eqnarray*}
Since $|\log(1+s) - s| \le s^2$ for $s >-1/2$, we conclude $s_k = \frac{\lambda^2}{2 \beta k} + O(k^{-2})$, which 
establishes the claim upon summation from $j$ to $n-1$.
\end{proof} 

\section{Riccati map and a second diffusion}

Riccati's substitution  takes a linear second order operator into one of first order, at the price of introducing
a quadratic nonlinearity.  Its use in the study of random spectra has a long history, dating
back to Halperin \cite{Hal}.  To employ it here we must first recover the differential form of the
eigenvalue problem from the established integrated version 
$\psi = \lambda \mathfrak{G}_{\beta,a}^{-1} \psi $, which reads in full:
\begin{eqnarray*}
    \psi(x) & = & \lambda \int_0^{\infty}  \int_0^{x \wedge y}  s(dz) \,    \psi(y) \, m(dy)  \\
                & = & \lambda \int_0^{\infty}   \left( \int_0^{ x \wedge y} \exp{[ a z + \stb b(z) ]} \, dz  \right)  \psi(y)   \exp{[ -(a+1) y - \stb b(y)]} \, dy.  
\end{eqnarray*}
Noting that any $f \in L^2[ m] $ is also in $L^1[m]$,  $  \mathfrak{G}_{\beta,a}^{-1} f$  is easily seen to be differentiable after writing the right hand side
as separate terms.  This property
is inherited by $\psi$, and we compute
\[
\psi'(x) = \lambda \exp[ a x + \stb b(x)  ]  \int_x^{\infty} \psi(y) \exp [ - (a+1) y  - \stb b(y)] \, dy,
\]
to find that $\psi$ is actually in $C^{3/2-}$.  Continue by taking (It\^o) differentials to arrive at the system
\begin{eqnarray}
  d \psi^{\prime}(x) & =  & \stb \psi^{\prime}(x) db(x) +  \left( (a+\tb ) \psi^{\prime}(x) - \lambda e^{-x} \psi(x)  \right) dx , \nonumber \\ 
   d \psi(x) &=  & \psi^{\prime}(x) dx,
\label{eq:PsiSystem}
\end{eqnarray} 
which is the appropriate way to interpret 
$\mathfrak{G}_{\beta,a} \psi = \lambda \psi$.
Taken independently of the preceding developments, (\ref{eq:PsiSystem}) has globally Lipschitz coefficients of linear growth, and as such defines
(for fixed $\lambda$) a unique Markov process $x \mapsto (\psi(x), \psi^{\prime}(x) )$  for any specified $(\psi(0), \psi^{\prime}(0))$ pair. 
As the uniqueness is pathwise we conclude along the way the simplicity of the corresponding eigenvalues: for given $\lambda$, any two $L^2$ solutions 
of $\psi = \lambda \mathfrak{G}_{\beta,a}^{-1} \psi $ vanishing at the origin must be constant multiples of one another.

Now bring in Riccati's map, $p(x) = \psi'(x)/\psi(x)$, valid away from the zeros of $\psi$. 
Since $\psi$ is continuously differentiable, we find from  (\ref{eq:PsiSystem}) and elementary calculus:
\begin{equation}
dp(x) = \stb p(x)\, db(x) + \left( (a+ \tb ) p(x)- p^2(x) - \lambda e^{-x}  \right) dx,
\label{Riccati}
\end{equation}
defining yet another Markov process for any fixed $\lambda$.
The relevance of (\ref{Riccati})  in counting eigenvalues of $\mathfrak{G}_{\beta,a}$
is first understood through the truncated operator ${\mathfrak{G}}_{\beta, a}^L$,
indicating ${\mathfrak{G}}_{\beta, a}$ restricted to $[0,L]$ with Dirichlet conditions at both endpoints.

\begin{lemma}
\label{countlemma}
Consider the unique diffusion $p(x) = p(x;   \lambda )$ 
started at $+\infty$ at $x=0$, and restarted at $+\infty$
immediately after any passage to $-\infty$.  The number of eigenvalues of
 ${\mathfrak{G}}_{\beta, a}^L$ less than $\lambda$ is equal in law to the number of explosions  of
 $p$ before
$x=L$.
\end{lemma}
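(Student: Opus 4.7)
The plan is to combine Sturm oscillation with the Riccati map that has already been set up in the section. First, using the linear system (\ref{eq:PsiSystem}), for each $\lambda \in \RR$ I would pick the unique pathwise solution $\psi_\lambda$ with initial condition $(\psi_\lambda(0), \psi_\lambda'(0)) = (0, 1)$. This is well-defined (and $C^{3/2-}$) by the globally Lipschitz linear growth of the coefficients noted just above (\ref{Riccati}). An eigenvalue of $\mathfrak{G}_{\beta, a}^L$ is then, by definition, a value of $\lambda$ for which $\psi_\lambda(L) = 0$. I would then run the standard Sturmian/rotation-number argument: because $\psi_\lambda$ depends continuously (indeed analytically) on $\lambda$ through the parameter in (\ref{eq:PsiSystem}), and because $(\psi, \psi') = (0,0)$ is absorbing for the linear system so $\psi_\lambda$ never has a double zero, zeros of $\psi_\lambda$ in $(0,L]$ depend monotonically on $\lambda$. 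As $\lambda$ increases past each eigenvalue $\Lambda_k^L$, exactly one new zero enters the interval at the right endpoint $L$ and travels leftward. Consequently, for any $\lambda$ not in the spectrum, the number of eigenvalues $< \lambda$ equals the number of zeros of $\psi_\lambda$ in the open interval $(0, L)$.

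Next, I would pass to Riccati coordinates $p(x) = \psi_\lambda'(x)/\psi_\lambda(x)$. Away from zeros of $\psi_\lambda$, It\^o's formula applied to (\ref{eq:PsiSystem}) yields (\ref{Riccati}). At each zero $x_0 \in (0,L)$ of $\psi_\lambda$, one has $\psi_\lambda'(x_0) \ne 0$ (by the non-vanishing just mentioned), so $\psi_\lambda(x) = \psi_\lambda'(x_0)(x - x_0) + o(x-x_0)$ and thus $p(x) \sim 1/(x - x_0)$. This forces $p(x_0^-) = -\infty$ and $p(x_0^+) = +\infty$, matching the claimed ``explode to $-\infty$, restart at $+\infty$" dynamics. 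The Dirichlet boundary condition $\psi_\lambda(0) = 0$ with $\psi_\lambda'(0) = 1$ similarly corresponds to $p(0^+) = +\infty$, which is the advertised initial condition. Thus the explosions to $-\infty$ of $p$ on $(0, L)$ are in one-to-one correspondence with the zeros of $\psi_\lambda$ on $(0, L)$, and the lemma follows.

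To make the Riccati process well-defined as a bona fide Markov process started at $+\infty$ and restarted after each explosion, I would approximate: solve (\ref{Riccati}) from a large finite starting value $p(0) = N$ at time $x = 0$, and compare with the $\psi_\lambda$-derived process (with $\psi'(0)/\psi(0)$ initial value tending to $+\infty$). The drift term $-p^2$ dominates near $+\infty$, so $p$ descends from $+\infty$ instantaneously; standard SDE comparison lets one pass $N \uparrow \infty$. For the restart, the correspondence with the non-vanishing $\psi_\lambda'$ at each zero identifies the post-explosion behavior unambiguously with the solution starting afresh from $+\infty$, so strong Markov arguments or explicit coupling give a consistent construction.

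The main obstacle is the bookkeeping at the two ``infinite'' endpoints of the $p$-process, namely giving a rigorous meaning to the diffusion started at $+\infty$ and to the restart at $+\infty$ right after an explosion. Once this is in place (which essentially reduces to the observation above that one can read $p$ off from $(\psi_\lambda, \psi_\lambda')$, where everything is finite and uniquely determined), the count of zeros equals the count of explosions and the count of eigenvalues, and one is done. Sturm oscillation itself is routine here because the linearity of (\ref{eq:PsiSystem}) rules out coalescence of zeros and because $\psi_\lambda(L)$ is an analytic function of $\lambda$ with zeros precisely at the Dirichlet eigenvalues.
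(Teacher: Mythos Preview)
Your proposal is correct and follows essentially the same route as the paper's proof: both take the sine-like solution $\psi_\lambda$ with $(\psi_\lambda(0),\psi_\lambda'(0))=(0,1)$, invoke Sturm oscillation to equate eigenvalue counts with zero counts, and then translate zeros of $\psi_\lambda$ into explosions of $p=\psi_\lambda'/\psi_\lambda$ via the Riccati map. The only small difference is in how the entrance at $+\infty$ is made rigorous: you approximate by finite starting points and pass to the limit via SDE comparison, whereas the paper compares with a homogeneous process (replacing $-\lambda e^{-x}$ by a negative constant) whose entrance boundary at $+\infty$ is read off from speed and scale, and then dominates the inhomogeneous $p$ over short time intervals.
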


\begin{proof}  This is well understood, and our treatment here is much the same as in \cite{RRV}, Section 3.
Take the sine-like solution of (\ref{eq:PsiSystem}), that is,  $\psi_0(x,\lambda)$  subject to $\psi_0(0, \lambda) =0$
and $\psi_0^{\prime}(0, \lambda) = 1$. Plainly, $\Lambda$ is an eigenvalue of $\mathfrak{G}_{\beta, a}^L$ if only if
$\psi_0(L, \Lambda) = 0$.  Regarding the ground state eigenvalue $\Lambda_0(L)$: if for any $\lambda$ $\psi_0(x, \lambda) > 0$ 
for $0 < x \le L$, then it must be that $\Lambda_0(L) > \lambda$, as an examination  of (\ref{eq:PsiSystem}) shows.  That is,
the event that $\{\Lambda_0(L)> \lambda \}$ is equal in law to the event
 $\{x \mapsto \psi_0(x,\lambda) \mbox{ has no roots before } x=L \}$. Continuing, additional zeros of 
 the (almost surely continuous) function $\lambda \mapsto \psi_0(L, \lambda)$ (and
 so additional eigenvalues) only occur by increasing $\lambda$, whereupon all other roots (in the $x$-variable) move to 
 the left. This equates the event that the $k$-th eigenvalue of $\mathfrak{G}_{\beta, a}^L$ lies above a fixed $\lambda$ and the event that 
 $\psi_0(x,\lambda)$
 has at most $k-1$ roots on $(0,L)$.

Now move to the $p(x,\lambda)$ formed from $\psi_0(x,\lambda)$ and its derivative. 
By appealing again uniqueness of solutions to (\ref{eq:PsiSystem}),  note $\psi_0$ and $\psi_0'$ cannot vanish simultaneously.
(In particular, the zeros of $\psi_0$ are isolated, and must be either finite in number or form a sequence tending to infinity.) Thus, at any root
$\mathfrak{m}$ of $x \mapsto \psi_0(x, \lambda)$, including $\mathfrak{m}=0$, an examination of signs shows that 
$\lim_{\varepsilon \downarrow 0} p(\mathfrak{m} + \varepsilon, \lambda) = + \infty$ and, when $\mathfrak{m} >0$,
$\lim_{\varepsilon \downarrow 0} p(\mathfrak{m} - \varepsilon, \lambda) = - \infty$.  That is, counting roots of $\psi_0(\cdot, \lambda)$ is to count passages of
the corresponding $p(\cdot, \lambda)$ to $-\infty$, after subsequent re-starts at $+\infty$.

To see that the $p$-picture stands on its own is to show that there is a unique solution of (\ref{Riccati}) starting from $+\infty$. Replacing 
the $-\lambda e^{-x}$ term in the drift with any negative constant produces a homogeneous motion with an entrance boundary at $+\infty$
(and which hits $-\infty$ with probability one).  This process (begun at $+\infty$) may be constructed unambiguously via speed and scale, see
again \cite{IM}. By successive dominations of the inhomogeneous $p$ in the statement  by such homogeneous versions over all short times,
one may conclude the existence and uniqueness of the former.
\end{proof}

Theorem \ref{DiffThm} now follows by taking $L \rightarrow \infty$ in Lemma  \ref{countlemma} with the
aid of the next fact.

\begin{lemma} 
As $L \rightarrow \infty$, the top $k$ eigenvalues of $ \mathfrak{G}_{\beta, a}^L $ converge to the top $k$ eigenvalues of $\mathfrak{G}_{\beta, a}$
with probability one.
\end{lemma}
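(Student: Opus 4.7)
The plan is to show that $(\mathfrak{G}_{\beta,a}^L)^{-1}$, extended by zero to act on all of $L^2([0,\infty), m)$, converges in Hilbert--Schmidt norm to $\mathfrak{G}_{\beta,a}^{-1}$; spectral continuity for compact self-adjoint operators then delivers convergence of each eigenvalue in natural order, giving $\Lambda_j(\beta,a,L) \to \Lambda_j(\beta,a)$ for each fixed $j$. As a first observation, $\Lambda_j(\beta,a,L) \geq \Lambda_j(\beta,a)$ and $L \mapsto \Lambda_j(\beta,a,L)$ is non-increasing: both follow from the min--max principle, because extension by zero furnishes a form-preserving embedding of the (form) domain of $\mathfrak{G}_{\beta,a}^L$ into that of $\mathfrak{G}_{\beta,a}$.

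For the Hilbert--Schmidt convergence, write out the Green's functions via the standard Sturm--Liouville construction. Setting $u_0(x) = \int_0^x s(dz)$ and $S(L) = u_0(L)$, the kernel of $(\mathfrak{G}_{\beta,a}^L)^{-1}$ with Dirichlet at both endpoints is
$$ G_L(x,y) = \frac{u_0(x \wedge y)\bigl[S(L) - u_0(x \vee y)\bigr]}{S(L)}, \qquad 0 \leq x, y \leq L,$$
while $\mathfrak{G}_{\beta,a}^{-1}$ has kernel $G(x,y) = u_0(x \wedge y)$ by (\ref{Greensfunction}). The extended difference decomposes as a rank-one correction $u_0(x) u_0(y)/S(L)$ on $[0,L]^2$ plus the tail $G(x,y)\,\mathbf{1}_{[0,\infty)^2 \setminus [0,L]^2}$. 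The tail vanishes in $L^2(m \otimes m)$ by dominated convergence using $G \in L^2(m \otimes m)$ from Lemma \ref{HSlemma}. The rank-one piece has Hilbert--Schmidt norm $\int_0^L u_0^2 \, m / S(L)$, which by $u_0 \leq S(L)$ on $[0,L]$ is bounded above by $\mathrm{tr}(\mathfrak{G}_{\beta,a}^{-1})/S(L)$, and vanishes as $L \uparrow \infty$ whenever $S(L) \to \infty$ almost surely.

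The main obstacle is controlling the scale integral $S(L) = \int_0^L e^{az + (2/\sqrt{\beta}) b(z)} dz$. For $a \geq 0$ it diverges almost surely as a direct consequence of Brownian asymptotics. For $-1 < a < 0$, however, Dufresne's identity yields $S(\infty) < \infty$ almost surely, and the Hilbert--Schmidt estimate on the rank-one correction breaks down. In this regime one instead invokes the limit-point character of $\mathfrak{G}_{\beta,a}$ at $+\infty$, forced by the exponential confinement visible in the rewriting $-\mathfrak{G}_{\beta,a} = e^x\bigl(d^2/dx^2 - (a + (2/\sqrt{\beta}) b'(x))\, d/dx\bigr)$. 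Limit-point guarantees a unique self-adjoint extension, and combined with the trace-class compactness of the resolvents, standard Sturm--Liouville perturbation theory delivers strong resolvent convergence of the Dirichlet-truncated $\mathfrak{G}_{\beta,a}^L$; together with the monotone lower bound from the first paragraph this yields the matching upper bound $\limsup_L \Lambda_j(\beta, a, L) \leq \Lambda_j(\beta, a)$, completing the proof.
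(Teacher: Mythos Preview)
Your approach mirrors the paper's: both write out the Green's kernel $s_L(x,y)=u_0(x\wedge y)\bigl[S(L)-u_0(x\vee y)\bigr]/S(L)$ for the truncation and compare it to $G(x,y)=u_0(x\wedge y)$. The paper argues trace-norm convergence via Simon's criterion (pointwise domination $s_L\le G$ and pointwise convergence $s_L\to G$, hence weak convergence and trace convergence by dominated convergence, then \cite{Simon}, Theorem~2.20); you instead go for Hilbert--Schmidt convergence via the explicit rank-one-plus-tail decomposition of $G-G_L$. Both routes run on the same hypothesis, $S(L)\to\infty$.

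Two issues. The minor one: your bound on the rank-one piece is mis-stated. From $u_0\le S(L)$ on $[0,L]$ you only get $\int_0^L u_0^2\,m/S(L)\le\int_0^L u_0\,m\le\mathrm{tr}\,\mathfrak{G}_{\beta,a}^{-1}$, not $\mathrm{tr}\,\mathfrak{G}_{\beta,a}^{-1}/S(L)$. The conclusion is still right when $S(L)\to\infty$ (split the integral at an intermediate $K$: the $[0,K]$ contribution is $O(1/S(L))$, the $[K,L]$ contribution is at most $\int_K^\infty u_0\,m$, small for $K$ large), but the one-line inequality you wrote is false.

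The substantive gap is your treatment of $-1<a<0$. The claim that $+\infty$ is limit-point is incorrect. Since $a+1>0$, the speed measure $m$ has finite total mass (Dufresne once more), so the constant solution $1$ of $\mathfrak{G}_{\beta,a}\phi=0$ lies in $L^2(m)$; and since you have already noted $S(\infty)<\infty$ in this range, the second solution $u_0$ is bounded, hence also in $L^2(m)$. Both solutions square-integrable means \emph{limit-circle} at $+\infty$, so there is no unique self-adjoint realization and your appeal to ``the'' extension fails. Indeed the Dirichlet truncations then converge in resolvent to the extension with kernel $u_0(x\wedge y)\bigl[S(\infty)-u_0(x\vee y)\bigr]/S(\infty)$, which is not the operator (\ref{Greensfunction}). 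The ``exponential confinement'' you invoke pertains to the speed measure, not the scale, and does not force limit-point. It is worth noting that the paper's own argument also tacitly uses $S(L)\to\infty$ (the asserted pointwise limit $s_L\to G$ fails otherwise), so this regime is genuinely delicate for both proofs rather than a difficulty specific to your route.
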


\begin{proof}  
This again demonstrates the advantage of having explicit inverses. 
Now $(\mathfrak{G}_{\beta, a}^L)^{-1} $  acts on $L^{2}( [0,L], m)$ via
$$
    \left( ( \mathfrak{G}_{\beta, a}^L)^{-1} f \right)(x) = \int_0^{\infty} s_L(x,y) f(y) \, m(dy)
$$
where
$$
   s_L(x,y) = \left[ \int_0^{x \wedge y} s(dz) \right]  \,  \times  \left[ \frac{ \int_{x \vee y}^L s(dz) }{ \int_0^L s(dz) }  \right]  \one_{\{x,y \in [0,L]\}}.
$$
Plainly, $s_L(x,y) \le \int_0^{x \wedge y} s(dz)$ and $ \lim_{L \rightarrow \infty} s_L(x, y) =  \int_0^{x \wedge y} s(dz)$ pointwise in $x$ and $y$, almost surely. 
By dominated convergence we have in the same mode that
$$
  \int_0^{\infty}  \int_0^{\infty} f(x) s_L(x,y) g(y) \,  m(dx) m(dy) \rightarrow   \int_0^{\infty}  \int_0^{\infty} f(x)  \left( \int_0^{x \wedge y} s(dz) \right) g(y) \,  m(dx) m(dy) 
$$ 
for all $f, g \in L^2[\RR_+, m]$, and
$$
 \mbox{ tr} \, (\mathfrak{G}_{\beta, a}^L)^{-1} = \int_0^L s_L(x,x) m(dx)  \rightarrow  \int_0^{\infty}  \int_0^x s(dy)  m(dx) = \mbox{ tr} \, \mathfrak{G}_{\beta, a}^{-1}.
$$
But these last two items imply convergence of $\mathfrak{G}_{\beta, a}^L$ to  $\mathfrak{G}_{\beta, a}$
in trace  norm (see \cite{Simon}, Theorem 2.20); the convergence of the eigenvalues then stems from
the same style of argument used in the proof of Theorem \ref{mainthm}.
\end{proof}

\section{The Hard-to-Soft transition}

Borodin-Forrester \cite{BF} discovered a transition between the hard and soft edge distributions
at $\beta = 1,2,$ and $4$.  Their proof rests on the explicit Fredholm determinant 
or Fredholm pfaffian form of these laws.  For example, at $\beta = 2$ one has that
\begin{equation}
\label{Fred1}
  P ( \Lambda_0( 2,a) > \lambda ) = 1 + \sum_{k=1}^{\infty} \frac{(-1)^k}{k!}  
    \int_0^{\lambda} dx_1 \cdots \int_0^{\lambda} d x_k  \det \Bigl[  K_{Bessel}(x_i, x_j) \Bigr]_{i,j = 1,\dots, k},
\end{equation}
while
\begin{equation}
\label{Fred2}
 P(TW_2 < \lambda) =  1 + \sum_{k=1}^{\infty} \frac{(-1)^k}{k!}  
    \int_{\lambda}^{\infty} dx_1 \cdots \int_{\lambda}^{\infty} d x_k  \det \Bigl[  K_{Airy}(x_i, x_j) \Bigr]_{i,j = 1,\dots, k}.
\end{equation}
Here, 
$$
   K_{Bessel}(x,y) := \frac{ J_a( \sqrt{x}) \sqrt{y} J_a^{\prime}(\sqrt{y})  - \sqrt{x} J_a^{\prime}(\sqrt{x}) J_a(\sqrt{y}) }{x-y}
$$
with $J_a$ the usual Bessel function of the first kind, which is replaced by the Airy function in
$$
  K_{Airy}(x,y) := \frac{ \Ai(x) \Ai^{\prime}(y) - \Ai^{\prime}(x) \Ai(y)}{ x-y}.
$$
For $\beta = 1$ or $4$ the determinants in (\ref{Fred1}) and (\ref{Fred2}) are replaced 
by quaternion determinants (or, equivalently,  pfaffians), but are  comprised of the same class of functions.  
Further, it is a fact that, suitably scaled, $J_a$ goes over into the Airy function as 
$a \ra \infty$, and the analysis of \cite{BF} demonstrates that one may pass this limit inside
the various multiple integrals in (\ref{Fred1}) and its analogues.

By a much different method, employing the Riccati correspondence, we show the same type of phenomena holds at all $\beta > 0$.

From Theorem \ref{DiffThm}, the event that $\{ \Lambda_0(\beta,a) > \lambda \}$
is equivalent in law to the process
$$
   dp(x) = \stb p(x) db(x) + \left( (a  + \tb) p(x) - p^2(x) -  {\lambda} e^{-x} \right) dx 
 $$
never hitting $-\infty$. While from \cite{RRV} we know that
 the probability of the event  $\{ TW_{\beta} < \mu \}$ equals the chance that
a separate motion $q$ given by
\begin{equation}
\label{q_eq}
   dq(x) = \stb  db(x)  + ( x + \mu - q^2(x)) dx
\end{equation}
also never hits $-\infty$.  (Both processes  are begun at $+\infty$.) 
The question is then:  with the scalings
$$
   a = 2 \sqrt{\eta} - \frac{2}{\beta}   > -1 \  \     \mbox{ and } \  \   \lambda =  \eta - \eta^{2/3} \mu ,
$$
does the chance of $p$-explosion go over into that of a $q$-explosion for large $\eta$?

To understand the mechanism, set $\mu = 0$ for a moment.
This scaled  $p$ solves
$$
   dp(x) =  \stb p(x) db(x) + (2 \sqrt{\eta}  p(x) - p^2(x) -  \eta e^{-x} ) dx,
$$
and obviously $\mathfrak{p} =   p/\sqrt{\eta}$ explodes or not with $p$ while satisfying
$$
   d \mathfrak{p}(x)  = \stb \mathfrak{p}(x) db(x) +  \sqrt{\eta} \,
   ( 2 \mathfrak{p}(x) - \mathfrak{p}^2(x) -   e^{-x} ) dx.
$$
For $\eta \uparrow \infty$, $\mathfrak{p}$ comes quickly to the place $\mathfrak{p} = 1$, and, if it manages to
tunnel through this point in a short time, explosion is hard to avoid.  
Within this excursion from $1^{+}$ to $1^{-}$ in a small $x$-window, the $q$-motion emerges.
To make this explicit we will use the following convergence criteria. 

\begin{proposition}[After Theorem 11.1.4 of \cite{StroockVar}]  
\label {SVlem}
Let $a(t,z)$ and  $b(t,z)$ be continuous
from $[0, \infty) \times $ $\RR$ into $\RR$.   For each $w \in \RR$, let the solution of the martingale problem for $a$ and $b$ 
(diffusion and drift coefficients respectively) begun 
from $w$ at $t=s$ be unique.
Denote this solution by $P_{s,w}$.
Suppose next that there are $\{a_n\}$ and $\{ b_n\}$ satisfying
$$
    \sup_{n \ge 1} \sup_{t < T} \sup_{|z| < M}  ( | a_n(t,z) | + |b_n(t,z)| ) < \infty
$$
and
$$
 \lim_{n \ra \infty} \int_0^T \sup_{|z| < M}  ( | a_n(t,z) - a(t,z) | + |b_n(t,z) - b(t,z)| ) \,dt = 0
$$
for all $T>0$ and $M>0$.  Then, if $P_{s,w}^n$ is a solution of the martingale problem for 
$a_n$ and $b_n$ starting from $(s,w)$,  $P_{s,w}^n \ra P_{s,w}$.
\end{proposition}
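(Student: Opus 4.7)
The plan is to follow the classical three-step route for weak convergence of solutions to martingale problems---tightness of $\{P^n_{s,w}\}$ on path space, identification of any subsequential weak limit as a solution of the $(a,b)$-martingale problem from $(s,w)$, and appeal to the uniqueness hypothesis to upgrade subsequential convergence to full convergence---essentially reworking the proof of Theorem 11.1.4 of \cite{StroockVar} to accommodate that our coefficients are only locally (not globally) bounded uniformly in $n$. The whole argument is carried out at the level of processes stopped at $\tau_M = \inf\{t \ge s : |X(t)| \ge M\}$ and is then closed by sending $M \to \infty$.

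For tightness, under $P^n_{s,w}$ the stopped process $X(\cdot \wedge \tau_M) - w - \int_s^{\cdot \wedge \tau_M} b_n(u,X(u))\,du$ is a square-integrable martingale with quadratic variation $\int_s^{\cdot \wedge \tau_M} a_n(u,X(u))\,du$, and the first hypothesis bounds both integrands uniformly in $n$ by a constant $C(T,M)$ on $\{s \le u \le T\}$. Burkholder-Davis-Gundy then yields $\EE^{P^n_{s,w}}[|X(t\wedge\tau_M) - X(r\wedge\tau_M)|^4] \le C'(T,M)(t-r)^2$, so Kolmogorov's criterion supplies tightness of the stopped laws on $C([s,T],\RR)$ for each $M$; a diagonal extraction in $M$ gives tightness of the full paths on the one-point compactification of $\RR$.

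For identification, fix $f \in C_c^\infty(\RR)$ and set $L_n = \tfrac12 a_n \partial_z^2 + b_n \partial_z$ and $L = \tfrac12 a \partial_z^2 + b \partial_z$. Since $f(X(t\wedge\tau_M)) - f(w) - \int_s^{t\wedge\tau_M}(L_n f)(u,X(u))\,du$ is a $P^n_{s,w}$-martingale, for any bounded continuous functional $\Phi$ of the path on $[s,r\wedge\tau_M]$ and any $r<t$ the quantity
$$
\EE^{P^n_{s,w}}\!\left[\Phi \cdot \left\{ f(X(t\wedge\tau_M)) - f(X(r\wedge\tau_M)) - \int_{r\wedge\tau_M}^{t\wedge\tau_M}(Lf)(u,X(u))\,du\right\}\right]
$$
equals $\EE^{P^n_{s,w}}\!\left[\Phi \cdot \int_{r\wedge\tau_M}^{t\wedge\tau_M}(Lf - L_n f)(u,X(u))\,du\right]$, which is bounded by $\|\Phi\|_\infty (\tfrac12 \|f''\|_\infty + \|f'\|_\infty) \int_0^T \sup_{|z|\le M}(|a_n-a|+|b_n-b|)(u,z)\,du$ and vanishes by the second hypothesis. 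Passing to a subsequential weak limit $P^*$ and using continuity of $a,b$---which ensures the $Lf$-functional is bounded continuous on paths that remain in $[-M,M]$ up to time $t$---one concludes that $P^*$ solves the $(a,b)$-martingale problem stopped at $\tau_M$, for every $M$. Sending $M \to \infty$ and invoking uniqueness of $P_{s,w}$ identifies $P^*$ with $P_{s,w}$, and the standard subsequence principle then promotes this to convergence of the entire sequence.

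The main obstacle will be excluding loss of mass at infinity in the limit: although each $P^n_{s,w}$ and $P_{s,w}$ lives on continuous $\RR$-valued paths, a priori a subsequential limit could concentrate on exploded trajectories on the compactified space. This is precisely where the uniqueness hypothesis does its essential work---since $P_{s,w}$ itself is a genuine solution and hence non-exploding on each $[s,T]$, the identification above matches any subsequential limit to $P_{s,w}$ on the stopped problem for every $M$ and thus on the full path, as needed. A technical point to be handled cleanly is the potential discontinuity of $\tau_M$ as a functional of paths that tangent the level $\pm M$, which is addressed either by a routine smoothing of the stopping time or by noting that under $P_{s,w}$ such tangencies occur with probability zero whenever $a(\cdot, \pm M) > 0$.
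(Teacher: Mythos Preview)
The paper does not give its own proof of this proposition: it is stated as a quotation (``After Theorem 11.1.4 of \cite{StroockVar}'') and used as a black box in the proof of Theorem~\ref{h2sthm}. So there is no paper-proof to compare against; your sketch is essentially a reconstruction of how the cited Stroock--Varadhan result is proved, via the standard tightness/identification/uniqueness scheme with localization by $\tau_M$, and that outline is correct.

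Two small points worth tightening. First, your remark that tangencies at $\pm M$ have $P_{s,w}$-probability zero ``whenever $a(\cdot,\pm M)>0$'' is not guaranteed by the hypotheses here (no non-degeneracy is assumed); the cleaner fix is the usual one of choosing $M$ from a co-countable set of levels for which $P_{s,w}(\sup_{t\le T}|X_t|=M)=0$, or equivalently varying $M$ slightly. Second, your handling of non-explosion is right in spirit but a touch circular as phrased: you first pass to the one-point compactification to get tightness, then argue the limit cannot charge exploded paths because it agrees with $P_{s,w}$ on each stopped problem. It is cleaner to note at the outset that the very existence of a (unique) solution $P_{s,w}$ on $C([s,\infty),\RR)$ forces $P_{s,w}(\tau_M\le T)\to 0$ as $M\to\infty$, and then use the identification on stopped paths to transfer this to any subsequential limit, yielding tightness on genuine $\RR$-valued paths directly.
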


\begin{proof}[Proof of Theorem \ref{h2sthm}] Restoring a generic value of $\mu$ we write
\begin{equation}
\label{full_p}
   d \mathfrak{p}(x)  = \stb \mathfrak{p}(x)  db(x) +  \eta^{1/2} \,
   \Bigl( 2   \mathfrak{p}(x) - \mathfrak{p}^2(x) -   (1 - \eta^{-1/3} \mu )e^{-x}  \Bigr) dx.
\end{equation}
Here $\mathfrak{p}(0) = +\infty$, while to utilize the proposition it is convenient to move the
starting point to a finite place.  

Certainly,
$$
  P_{+\infty} \Bigl(\mathfrak{p}  \mbox{ never explodes} \Bigr) 
  \ge P_{1+ \varepsilon} \Bigl( \mathfrak{p}  \mbox{ never explodes} \Bigr)
$$
for whatever $\varepsilon >0$. Also, 
$$
   P_{+\infty} \Bigl(\mathfrak{p}  \mbox{ never explodes} \Bigr)  \le P _{+\infty} \Bigl(\mathfrak{p}  
   \mbox{ never explodes},  \m_{1+\varepsilon} \le \delta \Bigr) + P_{+\infty} \Bigl( \m_{1+\varepsilon} \ge \delta  \Bigr)
$$
where $\m_{c}$ is the fist passage to the point $c$ and $\delta > 0$. By the Markov property and
monotonicity, the first term on the right is less than the $(P_{\delta, 1+\varepsilon})$-probability
of no explosion.  We wish to bound the second term from above for large $\eta$, and 
to that end note that
$P_{+\infty}(\m_a \le \m_a^{\delta}) = 1$ where $\m_a^{\delta}$ is the passage time of the 
homogeneous process $\mathfrak{p}_{\delta}$ in which the appearance of $e^{-x}$ in the
 $\mathfrak{p}$ drift is replaced by $e^{-\delta}$.  (The obvious coupling is used.) 
 Hence,
\begin{equation}
\label{cheby1}
  P_{+\infty} \Bigl( \m_{1+\varepsilon} > \delta  \Bigr) \le \frac{1}{\delta} E_{+\infty} [ \m_{1+\varepsilon}^{\delta} ]
   = \frac{1}{\delta} \int_{1+\varepsilon}^{\infty}  \int_{1+\varepsilon}^x s(dy) m(dx)
\end{equation}
for $ m(dx) $ and $s(dx)$  the  speed and scale measures of $\mathfrak{p}_{\delta}$:
$$
     m(dx) =  \frac{2}{\b} \frac{1}{x^2} e^{- \sqrt{\eta} \psi(x)}  dx,  \  \   s(dx) = e^{\sqrt{\eta} \psi(x)} dx,  \  \ 
     \psi(x) = \frac{\b}{2} \Bigl[  x - 2 \ln x - c_{\eta, \delta} \frac{1}{x} \Bigr]
$$
and $c_{\eta, \delta} = ( 1 - \mu \eta^{-1/3} ) e^{-\delta}$. Next choose
\begin{equation}
\label{scale1}
  \varepsilon= \varepsilon(\eta) = M \eta^{-1/6},  \  \  \  \delta = \delta(\eta) = \frac{1}{K} \eta^{-1/3},  
\end{equation}
where $K \ge 1$ and $M \ge \sqrt{|\mu| + 2}$.  
These last precautions imply that $\psi(x)$ is increasing for $x > 1+ \varepsilon$.  Then
an exercise in stationary phase allows the continuation of (\ref{cheby1})  as 
$$
   P_{+\infty} \Bigl( \m_{1+\varepsilon(\eta)} > \delta(\eta)  \Bigr) \le  K \eta^{1/3} \int_{1+ M \eta^{-1/6}}^{\infty}   \frac{1}{x^2}  \int_{1+ M \eta^{-1/6}}^x
 e^{- \sqrt{\eta} [ \psi(x) -  \psi(y) ]}   \, dy dx  \le  C \frac{K}{M},
 $$
for $\eta \uparrow \infty$ and a  constant $C$  depending only on $\beta$, the inner integral concentrating at the upper limit $y = x$.
In summary, for $\mathfrak{p}$ paths we have  that
\begin{equation}
\label{sandwhich}
  P_{0, 1+ \varepsilon(\eta) }  \Bigl( \m_{-\infty} = \infty \Bigr)  \le  P_{0, +\infty}  \Bigl( \m_{-\infty} = \infty  \Bigr)  
  \le   P_{\delta(\eta), 1 + \varepsilon(\eta)}  \Bigl( \m_{-\infty} = \infty   \Bigr) +  C \frac{K}{M}
\end{equation}
holds for all large $\eta$.
 
Now bring in
$$
    q_{\eta}(x) =  \eta^{1/6} \Bigl(  \mathfrak{p}( \eta^{-1/3} x)  - 1 \Bigr),
$$  
and note that, when $\mathfrak{p}$ begins at $(0, \varepsilon(\eta))$, $q_{\eta}$ begins at $(0, M)$, and when $\mathfrak{p}$ begins 
at $(\delta(\eta), \varepsilon(\eta))$, $q_{\eta}$ begins at $(K^{-1}, M)$.   Further, 
$q_{\eta}$ hits $-\infty$ if and only if $\mathfrak{p}$ does, and a substitution in (\ref{full_p}) shows that $q_{\eta}$ satisfies
the It\^o equation
$$
   d q_{\eta}(x)  =  \stb \Bigl[ 1 + \eta^{-1/6} q_{\eta}(x) \Bigr] d{\hat{b}}(x) + \Bigl[  - q_{\eta}^2(x)  +  \eta^{1/3} \Bigl(  1 - ( 1- \eta^{-1/3} \mu )  e^{-\eta^{-1/3} x}
   \Bigr)  \Bigr] dx
$$
with a new Brownian Motion $\hat{b}(x) = \eta^{1/6} b(\eta^{-1/3} x)$.   Given  unique strong solutions in both instances, 
Proposition \ref{SVlem} easily applies with
$$
   a_{\eta}(t,z) = (2/\b) [ 1 + \eta^{-1/6} z]^2 \mbox{ and }    b_{\eta}(t,z) =  [ - z^2 +    \eta^{1/3} (  1 - ( 1- \eta^{-1/3} \mu )  e^{-\eta^{-1/3} t } ) ],
$$
the $q_{\eta}$-coefficients, 
and $a(t,z) = 2/\beta$ and $b(t,z) = - z^2 + \mu + t$, the $q$-coefficients (recall (\ref{q_eq})).   
That is to say,    $\lim_{\eta \ra \infty} E_{x,c}  [ \phi (q_\eta) ] =  E_{x, c} [  \phi(q) ]$ for all bounded continuous functions of the path,
and, by approximation we also find, via (\ref{scale1}) and (\ref{sandwhich}),  that
\begin{eqnarray*}
\lefteqn{ P_{0, M} \Bigl( q \mbox{ never explodes} \Bigr)  \le  \liminf_{\eta \ra \infty} 
P_{0, \infty} \Bigl( \mathfrak{p} \mbox{ never explodes} \Bigr) } \\
                                                                       & \le &  \limsup_{\eta \ra \infty} P_{0, \infty} \Bigl( \mathfrak{p} \mbox{ never explodes} \Bigr)  \le 
                                                          P_{K^{-1}, M} \Bigl( q \mbox{ never explodes} \Bigr) +  C \frac{K}{M}.                                                                                      
\end{eqnarray*} 
Note while $q \mapsto \mathfrak{m}_{-\infty}(q)$ is not continuous,  $q \mapsto \mathfrak{m}_{-L}(q)$ is for any $L$ finite (outside a set of measure zero).
It follows that we have the distributional convergence of $ \mathfrak{m}_{-L}(q_{\eta})$ to $ \mathfrak{m}_{-L}(q)$.  The approximation
required above is then to show that: $ \lim_{L \ra \infty} \mathfrak{m}_{-L}(q) = \mathfrak{m}_{-\infty}(q)$ holds in probability, with the
same limit taking place uniformly in $\eta$ when $q_{\eta}$ replaces $q$.  That all processes involved have exit barriers at $-\infty$
makes this routine.
To finish the proof, let $M$ and then $K$ tend to infinity. The $q$-law is continuous in its initial time,
and that $\lim_{M\ra\infty} P_{c, M} = P_{c,\infty}$ 
is a byproduct  of $+\infty$ being an entrance point.
\end{proof}

\bigskip

\noindent{\bf{Acknowledgments }} We thank P. Forrester for pointing out the transition 
problem  to us, and also M. Krishnapur and T. Kurtz  for helpful input.  
The work of the second author was supported in part by 
NSF grants DMS-0505680 and DMS-0645756.

\sc \bigskip \noindent
Jos{\'e}  A. Ram{\'{\i}}rez\\
Department of Mathematics, \\
 Universidad de Costa Rica,
San Jose  2060, Costa Rica. \\
{\tt  jaramirez@cariari.ucr.ac.cr}

\sc \bigskip \noindent Brian Rider \\  Department of
Mathematics,
\\ University of Colorado at Boulder, Boulder, CO 80309. \\{\tt
brian.rider@colorado.edu}

\end{document}